\documentclass{amsart}
\usepackage[T1]{fontenc}
\usepackage{amsmath, amsthm, amsfonts, amssymb}
\usepackage{enumerate}


\theoremstyle{definition}

\newtheorem{defn}{Definition}[section]

\theoremstyle{remark}
\newtheorem{rmk}[defn]{Remark}

\theoremstyle{plain}

\newtheorem{cor}[defn]{Corollary}

\newtheorem{lem}[defn]{Lemma}

\newtheorem{thm}[defn]{Theorem}

\numberwithin{equation}{section}

\newcommand{\DF}{\mathcal{E}}
\newcommand{\domDF}{\mathcal{F}}

\newcommand{\pl}{Y} 

\DeclareMathSizes{10}{9}{7}{5}

\title[Harmonic Sierpinski Gaskets]{Measurable Riemannian structure on higher dimensional harmonic Sierpinski gaskets}

\author[Chari, Frisch, Kelleher, Rogers]{Sara Chari, Joshua Frisch, Daniel~J. Kelleher, Luke~G.~Rogers}
\thanks{Authors supported in part by the National Science Foundation through grant DMS-1262929.}
\subjclass[2000]{Primary 28A80,60J35, 26A15}
\keywords{Analysis on Fractals, Sierpinski Gasket, Harmonic Coordinates}

\begin{document}

\begin{abstract}
We prove existence of a measurable Riemannian structure on higher-dimensional harmonic Sierpinski gasket fractals and deduce Gaussian heat kernel bounds in the geodesic metric.  Our proof differs from that given by Kigami for the usual Sierpinski gasket~\cite{KigMsbleRGeom} in that we show the geodesics are de~Rham curves, for which there is an extensive regularity theory.
\end{abstract}
%

\maketitle
%

%

\section{Introduction and Main Result} \label{introduction}

The basic elements of analysis on Sierpinski-Gasket type fractals were developed, in a more general context, by Kusuoka~\cite{Ku89} and Kigami~\cite{Kig89}.   The following definitions are from~\cite{KigMsbleRGeom}, though our presentation is different at some points and was influenced by~\cite{TepHarmCoords,Tep04}; detailed proofs of results not demonstrated here may be found in~\cite{Kigbook}.

\begin{defn}
For $N\geq2$ the classical $N$-Sierpinski Gasket $K_N$ is defined as follows. Let $\{p_j\}_{j=1}^N$ be the vertices of a regular $N$ simplex in $\mathbb{R}^{N-1}$ such that $|p_j-p_k|=1$ if $j\neq k$, and $F_i:\mathbb{R}^{N-1}\to\mathbb{R}^{N-1}$ be $F_j(x)=(x-p_j)/2 +p_j$. Then $K_N$ is the unique non-empty compact set such that $K_N=\cup_1^N F_j(K_N)$.  Note that $K_2$ is an interval and $K_3$ is the usual Sierpinski Gasket.
\end{defn}

Fix $N\geq2$. For notational simplicity we write $K$ for $K_N$.  Then $K$ is post-critically finite, with post-critical set $V_0=\{p_1,\dotsc,p_N\}$. Let $W_m=\{1,\dotsc,N\}^m$ denote the set of words length $|w|=m$, so $w\in W_m$ is $w=w_1\dotsm w_m$ with each $w_j\in\{1,\dotsc,N\}$. Let $W_*=\cup_m W_m$ and for $w=w_1\dotsm w_m\in W_*$ define $F_w=F_{w_1}\circ\dotsm\circ F_{w_m}$.  We set $V_m=\cup_{w\in W_m} F_w(V_0)$, which we call the set of scale~$m$ vertices; evidently this is the level~$m$ critical set. We write $p\sim_m q$ if $p$ and $q$ are both in $F_w(V_0)$ for some $w\in W_m$.

For each $N\geq2$ there is a non-negative definite, symmetric, quadratic form  on $K=K_N$ which may be defined as a limit of forms on the sets $V_m$.
\begin{defn}
For $u,v$ continuous functions on $K$ let
\begin{equation}\label{eqn:defnofDFm}
	\DF_m(u,v) = \Bigl(\frac{N+2}{N} \Bigr)^m \sum_{p\sim_m q} \bigl( u(p)-u(q)\bigr)\bigl( v(p)-v(q) \bigr).
	\end{equation}
Write $\DF_m(u)=\DF_m(u,u)$.  It is known that $\DF_m(u)\leq\DF_{m+1}(u)$ for all $m\geq0$, so that $\lim_m\DF_m(u)$ exists. Define $\domDF=\{u:\lim_m\DF_m(u)<\infty\}$ and $\DF(u,v)=\lim_m \DF_m(u,v)$. When $N=2$, $\domDF$ is the space of functions with one derivative in $L^2$ that vanishes at the endpoints, and $\DF$ is the $L^2$ norm of the derivative.
\end{defn}

A proof of the following result is in~\cite{Kigbook}.
\begin{thm}
Let $\mu$ be an atomless Borel probability measure on $K$ with $\mu(O)>0$ if $O$ is non-empty and open. Then $(\DF,\domDF)$ is a  local regular Dirichlet form on $L^2(K,\mu)$.
\end{thm}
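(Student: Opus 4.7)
The plan is to verify the defining properties of a local regular Dirichlet form in the order: symmetry and non-negativity, continuous embedding into $C(K)$, closedness, the Markov property, locality, and regularity. Symmetry and non-negativity are immediate from \eqref{eqn:defnofDFm}, since each $\DF_m$ is a positive combination of squared differences and these properties pass through to the monotone limit.

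The first substantive step is to establish a continuous embedding $\domDF \hookrightarrow C(K)$. The renormalization factor $(N+2)/N$ is chosen precisely so that harmonic extension from $V_m$ to $V_{m+1}$ preserves the form; this yields both the monotonicity $\DF_m(u) \leq \DF_{m+1}(u)$ quoted in the excerpt and the variational characterization $\DF_m(u) = \inf\{\DF_{m+1}(\tilde u) : \tilde u|_{V_m} = u|_{V_m}\}$. From this one extracts the resistance estimate $(u(p)-u(q))^2 \leq R(p,q)\,\DF(u)$ for $p,q \in \bigcup_m V_m$, where $R$ is the effective resistance metric; standard arguments show $R$ is comparable to a power of Euclidean distance, so every $u \in \domDF$ is H\"older continuous and $\domDF$ embeds continuously into $C(K)$ and hence into $L^2(K,\mu)$.

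Given this embedding, closedness follows in the standard way: if $\{u_n\}$ is Cauchy in the form norm and converges to $u$ in $L^2$, then uniform control on $\DF(u_n)$ combined with the resistance estimate upgrades the convergence to uniform on $K$, each $\DF_m(u_n - u)$ depends only on finitely many pointwise evaluations so is controlled by the uniform convergence, and taking $m \to \infty$ shows $u \in \domDF$ with $\DF(u_n - u) \to 0$. The Markov property is immediate from $|\bar u(p)-\bar u(q)| \leq |u(p)-u(q)|$ for $\bar u = (0 \vee u)\wedge 1$, which gives $\DF_m(\bar u) \leq \DF_m(u)$ and passes to the limit. For locality, if $u$ is constant on an open neighborhood $U$ of $\mathrm{supp}(v)$, then for $m$ sufficiently large every $m$-cell $F_w(K)$ meeting $\mathrm{supp}(v)$ lies in $U$, so every pair $p \sim_m q$ contributes zero to $\DF_m(u,v)$.

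Regularity requires $\domDF \cap C(K)$ to be dense in $C(K)$ under the sup norm; density in the form norm is automatic since $\domDF \subset C(K)$. Given $f \in C(K)$, I take $h_m$ to be the piecewise $m$-harmonic extension of $f|_{V_m}$, so that $h_m \in \domDF$ and $\|h_m - f\|_\infty \to 0$ by uniform continuity of $f$ together with the explicit harmonic extension matrices. The main obstacle is the resistance-metric embedding $\domDF \hookrightarrow C(K)$: it requires a sharp enough resistance estimate on higher-dimensional gaskets to upgrade $L^2$ convergence to uniform convergence, and once this is established the remaining properties reduce to routine manipulations at the level of the finite-dimensional forms $\DF_m$.
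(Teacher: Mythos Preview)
Your sketch is essentially correct and follows the standard route for p.c.f.\ self-similar sets. However, the paper does not actually prove this theorem: the sentence immediately preceding the statement reads ``A proof of the following result is in~\cite{Kigbook},'' and no argument is given. So there is nothing in the paper to compare your proposal against; what you have written is in effect an outline of the proof one finds in Chapters~2--3 of Kigami's book, where the resistance-form machinery (the estimate $|u(p)-u(q)|^2\leq R(p,q)\DF(u)$, closedness, the Markov property via normal contractions, locality from the cell structure, and regularity via piecewise-harmonic approximation) is developed in the general p.c.f.\ setting. Your identification of the resistance embedding $\domDF\hookrightarrow C(K)$ as the one substantive step is accurate; once that is in hand the remaining verifications are indeed routine at the level of the approximating forms $\DF_m$.

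One small refinement: for the embedding you do not need comparability of $R$ to a power of the Euclidean metric, only that $R$ induces the original topology on $K$ (equivalently, that $\sup\{R(p,q):p,q\in F_w(V_0)\}\to 0$ as $|w|\to\infty$), which follows directly from self-similarity of the resistance form with renormalization factor $N/(N+2)<1$. This slightly streamlines the argument and avoids invoking a sharper estimate than is needed.
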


The analytic structure corresponding to the Dirichlet form $(\DF,\domDF)$ is greatly elucidated by considering the harmonic gasket, or gasket in harmonic coordinates.  A function $h\in\domDF$ is harmonic if $\DF_m(h)=\DF_{m+1}(h)$ for all $m$. Since this involves minimization of a quadratic functional the minimizer is given by a linear operator on the values on $V_m$.  When $m=0$ we write $H_j$ for the operator taking the values on $V_0$ to those on $F_j(V_0)$, more precisely:
\begin{equation*}
	H_j: \bigl(h(p_1),\dotsc,h(p_N)\bigr) \mapsto \bigl(h(F_j p_1),\dotsc,h(F_j p_N)\bigr).
	\end{equation*}
By self-similarity we find $H_j: \bigl(h(F_w p_1),\dotsc,h(F_w p_N)\bigr) \mapsto \bigl(h(F_wj p_1),\dotsc,h(F_wj p_N)\bigr)$ for any $w\in W_*$, and therefore the values of $h$ on $V_*$ are determined by composing the $H_j$ (or multiplying the corresponding matrices, which we also denote by $H_j$); this determines $h$ on $K$ by continuity, so the fact that the $H_j$ are invertible ensures the harmonic functions are in one-to-one correspondence with functions on $V_0$ and are an $N$-dimensional space.  The latter may be seen by explicitly computing (from~\eqref{eqn:defnofDFm}) that
\begin{equation}
	H_1= \frac{1}{N+2} \begin{bmatrix}
		N+2	& 0  \\
		2	& I+J 
		\end{bmatrix}
	\end{equation}
where $I_{N-1}$ is the identity, $J_{N-1}$ is the size $N-1$ square matrix with all entries equal $1$, and we have written $0$ and $2$ for the length  $(N-1)$ vectors with all entries equal $0$ and $2$ respectively.  Symmetry implies the other $H_j$ may be obtained by cyclic row and column permutations.

\begin{defn}
Fix $N\geq2$ and for $1\leq j\leq N$ let $\psi_j$ denote the harmonic function on $K_N$ which is $1$ at $p_j$ and $0$ on $V_0\setminus\{p_j\}$.  The function $\Psi:K\to\mathbb{R}^N$ given by $\Psi=\bigl((\psi_1,\dotsc,\psi_N)-(1,\dotsc,1)/N\bigr)/\sqrt{2}$ is injective (see~\cite{Kig93}), so it is a homeomorphism onto its image $X_N$, which we call the {\em harmonic N-Sierpinski gasket}.
\end{defn}

Constant functions are harmonic, so $\sum_j \psi_j\equiv1$ and the harmonic N-Sierpinski gasket $X_N$ lies in the subspace $\pl=\bigl\{(x_1,\dotsc,x_N):\sum_j x_j=0\bigr\}$, which we identify with $\mathbb{R}^{N-1}$.  Moreover $\Psi(V_0)$ is the set of vertices of a regular $N$-simplex with unit length sides in this subspace, so we identify $\Psi(V_0)$ with $V_0$ via $\Psi(p_j)=p_j$.   It is an important fact that $X_N$ is a self-affine set under maps conjugate to the $F_j$ via $\Psi$.

\begin{thm}[\protect{\cite{Kig93}}]
For each $1\leq j\leq N$ let $T_j$ be the linear map on $\pl$ that contracts the direction $\Psi(p_j)$ by the factor $N/(N+2)$ and all orthogonal directions by the factor $1/(N+2)$.  Let $S_j(x)=T_j(x-p_j)+p_j$, so $S_j:Y\to Y$.  Then $\Psi\circ F_j= S_j\circ\Psi$  for each $j$ and therefore $X_N=\cup_1^N S_j(X_N)$ is self-affine.
\end{thm}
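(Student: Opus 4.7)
The plan is to show $\Psi\circ F_j = S_j\circ\Psi$ by first establishing that $\Psi\circ F_j$ is an affine map whose linear part is $H_j^T$ restricted to $\pl$, then identifying this linear part with $T_j$ and matching the translation via the fixed point $p_j$.

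Step 1 (Affine structure). The composition of a harmonic function with $F_j$ is again harmonic on $K$, so each $\psi_i\circ F_j$ is harmonic with boundary values $\psi_i(F_j p_k)=(H_j)_{ki}$. Expanding in the harmonic basis $\{\psi_k\}$ gives $\psi_i\circ F_j = \sum_k (H_j)_{ki}\psi_k$, or in vector form $\vec\psi\circ F_j = H_j^T\vec\psi$ where $\vec\psi=(\psi_1,\dotsc,\psi_N)^T$. Centering by $\vec 1/N$ and rescaling by $1/\sqrt 2$, this produces $\Psi\circ F_j = A_j\Psi + c_j$ for some linear map $A_j$ and translation $c_j$. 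Since $H_j\vec 1=\vec 1$ (constants are harmonic), the transpose $H_j^T$ preserves the hyperplane $\pl=\{\vec 1^T x=0\}$, and $A_j = H_j^T|_\pl$ and $c_j=(H_j^T\vec 1-\vec 1)/(N\sqrt 2)\in\pl$.

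Step 2 (Identifying $A_j=T_j$). I would extract the eigenstructure of $H_j^T|_\pl$ by direct computation using the explicit form of $H_1$ from the excerpt. Applying $H_1^T$ to the vector $(N-1,-1,\dotsc,-1)$ that spans $\R\Psi(p_1)$ inside $\pl$ should yield eigenvalue $N/(N+2)$, while applying it to vectors $(0,v_2,\dotsc,v_N)\in\pl$ orthogonal to that direction (i.e., $\sum_{k\geq 2}v_k=0$) should yield eigenvalue $1/(N+2)$; both calculations reduce to the row and column sum properties of the block $I+J$ in $H_1$. These eigenvalues match the defining eigenvalues of $T_1$ on its two invariant subspaces, so $H_1^T|_\pl=T_1$, and cyclic permutation symmetry gives $H_j^T|_\pl=T_j$ in general. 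A cleaner version of this step is representation-theoretic: the stabilizer of $p_j$ in the $S_N$-symmetry group of $K$ is $S_{N-1}$, whose representation on $\pl$ splits as $\R\Psi(p_j)$ plus a complementary $(N-2)$-dimensional irreducible, so Schur's lemma forces $A_j$ to act as a scalar on each summand and only the two scalars need to be computed.

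Step 3 (Translation and self-affinity). Since $F_j$ fixes $p_j$ and $\Psi(p_j)=p_j$ under the identification in the excerpt, evaluating $\Psi\circ F_j = T_j\Psi + c_j$ at $p_j$ yields $c_j = p_j - T_j p_j$. Therefore $\Psi\circ F_j = T_j(\Psi-p_j)+p_j = S_j\circ\Psi$, which is the first assertion. Applying $\Psi$ to the self-similarity $K=\bigcup_j F_j(K)$ then immediately gives $X_N = \Psi(K) = \bigcup_j \Psi(F_j(K)) = \bigcup_j S_j(X_N)$, as claimed.

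The main obstacle is the spectral analysis in Step 2. Symmetry (either cyclic or the $S_{N-1}$ representation) identifies the two invariant subspaces essentially for free, but extracting the specific contraction ratios $N/(N+2)$ and $1/(N+2)$ requires an honest block-matrix computation with $H_1$, with careful bookkeeping of the interaction between the first row/column and the $(I+J)$ block.
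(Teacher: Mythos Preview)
Your proposal is correct. Note, however, that the paper does not actually prove this theorem: it is quoted as a background result from~\cite{Kig93} and no argument is given in the present paper. So there is nothing to compare against here beyond the original reference.

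That said, your outline is sound and would produce a complete proof. The identity $\vec\psi\circ F_j = H_j^T\vec\psi$ in Step~1 is the key structural fact, and your justification via the harmonic extension property and the definition of $H_j$ is exactly right. The eigenvalue computations you sketch in Step~2 do check out: applying $H_1^T$ to $(N-1,-1,\dotsc,-1)^T$ gives $\frac{N}{N+2}$ times that vector, and applying it to $(0,w_2,\dotsc,w_N)^T$ with $\sum_{k\ge 2}w_k=0$ gives $\frac{1}{N+2}$ times that vector, since the $J$-block contribution vanishes. Your representation-theoretic alternative via the $S_{N-1}$ stabilizer is also valid and arguably cleaner for identifying the invariant subspaces, though one still needs the explicit form of $H_1$ to pin down the two scalars. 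Step~3 is routine once Steps~1 and~2 are in place.

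One small point of presentation: in Step~2 you phrase things as ``I would extract'' and ``should yield''; since the computation is short and decisive, it would be better simply to carry it out rather than describe it conditionally.
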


\begin{rmk}
We will sometimes write $S_j$ in the equivalent form
\begin{equation}\label{eq:Sj}
	S_j x = \frac{x}{N+2} + \frac{1}{N+2} \Bigl( 2+ \frac{(N-1)x\cdot p_j}{\|p_j\|^2} \Bigr) p_j
	\end{equation}
\end{rmk}

It is a special case of results of Kusuoka~\cite{Ku89} that there is a Carr\'{e} du Champs measure $\nu$ for the Dirichlet form $(\DF,\domDF)$ and an associated $\nu$-a.e.\ defined metric $Z$ that can be expressed in terms of the operators $T_j$ as in the following results.  Recall that the Hilbert-Schmidt norm on a linear operator on $\mathbb{R}^{N-1}$ may be defined by setting $\|T\|_{\text{HS}}^2$ to be the sum of the squares of the coefficients of the associated matrix with respect to the standard orthonormal basis.

\begin{thm}[\protect{\cite{Ku89}}]
For $w=w_1\dotsm w_m\in W_*$ let $T_w=T_{w_1}\dotsm T_{w_m}$ and $Z_m(w) = T_w T_w^t/\|T_w\|_{\text{HS}}^2$, where the adjoint is taken with respect to the usual inner product on $\pl$. Let $\Sigma=\{1,\dotsc,N\}^\mathbb{N}$ denote the space of infinite words.
\begin{enumerate}[(i)]
\item There is a unique, atomless, Borel regular probability measure $\nu$ on $\Sigma$, called the Kusuoka measure, such that for any $w\in W_*$
\begin{equation*}
	\nu( w\Sigma)=\frac{1}{N-1} \Bigl( \frac{N+2}{N}\Bigr)^{|w|} \bigl\| T_w \bigr\|^2_{\text{HS}}.
	\end{equation*}
\item The limit $Z(w)=\lim_{m\to\infty} Z_m(w_1\dotsm z_m)$ exists  and is the orthogonal projection onto its image for $\nu$-a.e.\ $w=w_1 w_2\dotsm\in\Sigma$.
\end{enumerate}
\end{thm}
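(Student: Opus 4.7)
The plan is to base both parts on a single identity for the matrices $T_j$, namely
\begin{equation*}
\sum_{j=1}^N T_j T_j^t = \frac{N}{N+2}\,I.
\end{equation*}
To derive it, observe that each $T_j$ is symmetric with eigenvalue $N/(N+2)$ on the line $\mathbb{R}p_j\subset \pl$ and $1/(N+2)$ on its orthogonal complement, so $T_j T_j^t = T_j^2 = (N+2)^{-2}I + \bigl((N/(N+2))^2-(N+2)^{-2}\bigr)P_j$, where $P_j$ is the orthogonal projection onto $\mathbb{R}p_j$. The regular-simplex symmetry of $\{p_1,\dotsc,p_N\}\subset \pl$ forces $\sum_j P_j$ to be a scalar multiple of $I$; taking traces gives $\sum_j P_j = (N/(N-1))I$, and summing the expressions for $T_j^2$ yields the displayed identity.

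For part (i), Kolmogorov consistency requires $\nu(w\Sigma)=\sum_{j=1}^N \nu(wj\Sigma)$, which after cancelling the common prefactors reduces to $\Tr(T_wT_w^t) = ((N+2)/N)\,\Tr\bigl(T_w(\sum_j T_jT_j^t)T_w^t\bigr)$; the identity above closes this computation. Kolmogorov's extension theorem then produces a unique Borel probability measure $\nu$ on $\Sigma$, and Borel regularity is automatic on this compact metric space. Since the largest singular value of each $T_j$ equals $N/(N+2)$, we have $\|T_w\|_{\text{HS}}^2 \leq (N-1)(N/(N+2))^{2|w|}$, so $\nu(w\Sigma)\leq (N/(N+2))^{|w|}\to 0$ along any infinite word, and $\nu$ is atomless.

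For part (ii), the same identity, combined with the observation that the conditional probability $\nu(vj\Sigma)/\nu(v\Sigma) = ((N+2)/N)\,\|T_{vj}\|_{\text{HS}}^2/\|T_v\|_{\text{HS}}^2$ (with $v=w_1\dotsm w_m$) exactly cancels the denominator in $Z_{m+1}(vj)$, shows that $E[Z_{m+1}\mid\mathcal{F}_m]=Z_m$ where $\mathcal{F}_m$ is the $\sigma$-algebra on $\Sigma$ generated by the first $m$ coordinates. Each $Z_m$ is symmetric positive semidefinite with $\Tr Z_m = 1$, hence lies in a bounded set, and the martingale convergence theorem applied entrywise yields the $\nu$-a.e.\ existence of the limit $Z(w)$.

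The principal remaining obstacle is to identify $Z(w)$ as an orthogonal projection; because $\Tr Z(w)=1$, this is equivalent to $\Tr(Z(w)^2)=1$ and forces $Z(w)$ to have rank one. Jensen's inequality for the convex function $M\mapsto \Tr(M^2)$ shows that $\Tr(Z_m^2)$ is a bounded submartingale, so it converges $\nu$-a.e.\ to some $\phi\leq 1$, and the heart of the proof is to show $\phi=1$ $\nu$-a.e. The natural route is a Furstenberg--Oseledets argument: each $T_j$ has a strict spectral gap (one singular value $N/(N+2)$, the remaining $N-2$ equal to $1/(N+2)$), and this forces the random product $T_{w_1\dotsm w_m}$ to have a strict gap between its top and second Lyapunov exponents, so that $\nu$-a.e.\ the ratio of the second to the top singular value decays exponentially in $m$. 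Translating back to $Z_m = T_{w_1\dotsm w_m}T_{w_1\dotsm w_m}^t / \|T_{w_1\dotsm w_m}\|_{\text{HS}}^2$ gives $\Tr(Z_m^2)\to 1$ and hence identifies $Z(w)$ as a rank-one orthogonal projection. Verifying the strict inequality between the top two Lyapunov exponents---via irreducibility and contraction properties inherited from the simplex geometry and the ratio $1/(N+2)$---is the delicate step where the specific structure of $K_N$ enters essentially.
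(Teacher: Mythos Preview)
The paper does not prove this theorem: it is quoted as a result of Kusuoka~\cite{Ku89} and stated without proof as background for the measurable Riemannian structure.  There is therefore no argument in the paper against which to compare your proposal.

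On its own merits, your treatment of part~(i) and of the a.e.\ existence of the limit in part~(ii) is correct and is essentially Kusuoka's line of argument: the identity $\sum_j T_jT_j^t=\tfrac{N}{N+2}I$ is exactly what makes the cylinder masses consistent and what turns $(Z_m)$ into a bounded matrix-valued martingale under $\nu$.

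The place to be more careful is your sketch of the rank-one conclusion.  You invoke a Furstenberg--Oseledets argument, but $\nu$ is not a product measure on $\Sigma$: the conditional law of the next letter depends on the whole prefix through $\|T_{vj}\|_{\text{HS}}^2/\|T_v\|_{\text{HS}}^2$, so Furstenberg's theorem for i.i.d.\ products is not directly available.  To run Oseledets you need $\nu$ to be shift-invariant and ergodic.  Shift-invariance does hold here, and in fact follows from the same identity because the $T_j$ are symmetric, so that $\sum_j T_j^tT_j=\sum_j T_jT_j^t$; but you have not said this, and ergodicity is a further nontrivial fact.  Only after these are in place does the simplicity of the top Lyapunov exponent (via the irreducibility and contraction you allude to) yield that $Z_m$ collapses to a rank-one projection $\nu$-a.e.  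Your final paragraph is honest that this is the delicate step, but as written the phrase ``this forces the random product to have a strict gap'' is hiding both the measure-theoretic hypotheses needed to invoke the multiplicative ergodic theorem and the verification of the gap itself.
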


Both $\nu$ and $Z$ can be transferred to $K$ using the obvious projection $\pi:\Sigma\to K$, which is defined by $\pi(w)=\lim_m F_{w_1\dotsm w_m}K$,  because this projection is injective off the measure-zero set $W_*$.  For this reason we abuse notation to use $\nu$ and $Z$ for the measure and a.e.-defined linear operator obtained by pushing forward under $\pi$.  

Together, the form $(\DF,\domDF)$, measure $\nu$ and metric  $Z$ are analogues of the Riemannian energy, volume and metric in that if $\mathcal{C}$ denotes the $\nu$-measurable functions $K\to\pl$ then the following theorem of Kusuoka  holds.
\begin{thm}[\protect{\cite{Ku89}}]
There is $\tilde{\nabla}:\domDF\to\mathcal{C}$ such that for all $u,v\in\domDF$
\begin{equation*}
	\DF(u,v) = \int_K \langle \tilde{\nabla}u,Z\tilde{\nabla}v\rangle\,d\nu.
\end{equation*}
\end{thm}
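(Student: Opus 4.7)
The plan is to construct $\tilde{\nabla}u$ as an $L^2(\nu)$-limit of piecewise-constant gradients of harmonic replacements. For $u\in\domDF$ and $w\in W_m$, let $h_w$ be the unique harmonic extension to $F_w(K)$ of the restriction $u|_{F_w(V_0)}$, and let $h^{(m)}$ be the piecewise-harmonic function with $h^{(m)}|_{F_w(K)}=h_w$. Since $\Psi$ provides harmonic coordinates and $\Psi\circ F_w = S_w\circ\Psi$ with $S_w$ affine, each $h_w\circ\Psi^{-1}$ is an affine function on $S_w(X_N)\subset Y$, so it has a constant gradient $(\nabla u)_w\in Y$. Define $\tilde{\nabla}_m u:K\to Y$ to be, up to a normalizing constant, the simple function taking the value $(\nabla u)_w$ on the interior of each $F_w(K)$.

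With this definition, I would first establish the \emph{level-$m$ energy identity}
\begin{equation*}
\DF_m(u,v)=\int_K\bigl\langle\tilde{\nabla}_m u,\,Z_m(w)\,\tilde{\nabla}_m v\bigr\rangle\,d\nu,
\end{equation*}
where on the cell $F_w(K)$ the matrix $Z_m(w)=T_wT_w^t/\|T_w\|_{\text{HS}}^2$ is to be read with $w$ the length-$m$ address of the cell. This is essentially a bookkeeping calculation: on one hand, using $\Psi\circ F_w=S_w\circ\Psi$ and the chain rule, $\DF|_{F_w(K)}(h_w)$ equals $((N+2)/N)^m$ times a constant multiple of $\|T_w^t(\nabla u)_w\|^2$; on the other hand, the mass $\nu(w\Sigma)=\frac{1}{N-1}((N+2)/N)^m\|T_w\|_{\text{HS}}^2$ cancels the factor $\|T_w\|_{\text{HS}}^2$ in the denominator of $Z_m$, leaving $\frac{1}{N-1}((N+2)/N)^m\|T_w^t(\nabla u)_w\|^2$. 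Equating these and choosing the normalization in the definition of $\tilde{\nabla}_m$ accordingly yields the identity.

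Next I would show that $\{\tilde{\nabla}_m u\}$ is Cauchy in $L^2(K,\nu;Y)$. The key ingredient is the $\DF$-orthogonality, for each cell $F_w(K)$ with $|w|=m$, of $h^{(m)}|_{F_w(K)}$ with $h^{(m+1)}|_{F_w(K)}-h^{(m)}|_{F_w(K)}$, since the latter difference vanishes on $F_w(V_0)$ and is harmonic only on subcells. This orthogonality gives the Pythagorean identity
\begin{equation*}
\int_K\bigl\langle\tilde{\nabla}_{m+1}u-\tilde{\nabla}_m u,\,Z_{m+1}(\tilde{\nabla}_{m+1}u-\tilde{\nabla}_m u)\bigr\rangle d\nu = \DF_{m+1}(u)-\DF_m(u),
\end{equation*}
whose right-hand side is summable because $\DF_m(u)\nearrow\DF(u)<\infty$.

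The main obstacle is converting this telescoping bound, which uses the \emph{approximate} operators $Z_m$, into genuine convergence with respect to the limiting $Z$. Here I would invoke the theorem stated above: $Z_m(w_1\dotsm w_m)\to Z(w)$ $\nu$-a.e., and $Z$ is an orthogonal projection. A Banach-Alaoglu / uniform integrability argument lets one identify a weak limit $\tilde{\nabla}u$, after which the a.e.\ convergence $Z_m\to Z$, combined with the uniform energy bound, upgrades this to convergence of $\int\langle\tilde{\nabla}_m u,Z_m\tilde{\nabla}_m u\rangle d\nu$ to $\int\langle\tilde{\nabla}u,Z\tilde{\nabla}u\rangle d\nu$. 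Polarization in $(u,v)$ and the level-$m$ identity then yield $\DF(u,v)=\int_K\langle\tilde{\nabla}u,Z\tilde{\nabla}v\rangle d\nu$. One subtle point is that $\tilde{\nabla}u$ is only defined modulo the kernel of $Z$, which is unavoidable and consistent with $Z$ being a rank-one projection $\nu$-a.e.
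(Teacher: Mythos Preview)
The paper does not prove this theorem; it is quoted from Kusuoka~\cite{Ku89} as background material, with no argument given in the present text. There is therefore nothing in the paper against which to compare your attempt.

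That said, your sketch follows the standard route one finds in the literature for constructing the Kusuoka gradient: build piecewise-harmonic approximants $h^{(m)}$, read off their (piecewise-constant) Euclidean gradients via the harmonic embedding $\Psi$, verify a discrete energy identity at each level using the self-affinity $\Psi\circ F_w=S_w\circ\Psi$ together with the definition of $\nu$, and then pass to the limit using the martingale/orthogonality structure and the $\nu$-a.e.\ convergence $Z_m\to Z$. The one place where your outline is genuinely soft is the final limiting step: controlling $\int\langle\tilde\nabla_m u,\,Z_m\,\tilde\nabla_m u\rangle\,d\nu$ via a weak-compactness-plus-a.e.-convergence argument requires more than you have written, since both the integrand and the operator $Z_m$ are varying. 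In Kusuoka's treatment this is handled by a martingale convergence argument for the matrix-valued process $Z_m$ together with an $L^2$ martingale for the gradients, which gives strong convergence directly rather than via weak compactness. If you intend to give a self-contained proof you should make that step precise.
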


The sense in which this structure is simplified by considering the harmonic gasket $X_N$ is captured by the following theorem which relates $\tilde{\nabla}$ to the classical gradient $\nabla$ on $\pl=\mathbb{R}^{N-1}$.  Roughly speaking, it says that the analytic structure we have on $K$ is just the restriction of the usual smooth structure on $\pl$ to the harmonic gasket $X$, transferred to $K$ via $\Psi$, when $K$ is endowed with the Kusuoka measure.
\begin{thm}[\protect{\cite{Ku89,KigMsbleRGeom}}]
Let $\mathcal{D}=\{u=v|_X\circ\Psi: v \text{ is } C^1 \text{ on a neighborhood of $X$ in $\pl$}\}$.  Then $\mathcal{D}$ is dense in $\domDF$ with respect to the norm $\|u\|^2=\DF(u)+\|u\|_\infty^2$.  Moreover for $u,v\in\mathcal{D}$ we have $\tilde{\nabla}u=Z\nabla u$ $\nu$-a.e.\ and
\begin{equation*}
	\DF(u,v) = \int_K \langle \nabla u,Z\nabla v\rangle\, d\nu.
	\end{equation*}
\end{thm}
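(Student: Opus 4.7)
The plan is to establish the integral formula $\DF(u,v)=\int_K\langle\nabla u,Z\nabla v\rangle\,d\nu$ for $u,v\in\mathcal D$ first, deduce the pointwise identity $\tilde\nabla u=Z\nabla u$ from uniqueness of $\tilde\nabla$ in Kusuoka's representation (using $Z^2=Z$), and only afterward prove density, because the integral formula is the natural tool for bounding the energy of smooth approximants.

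\textbf{Inclusion and self-similarity.} From $\psi_j=\sqrt 2\,\Psi_j+1/N$ by the definition of $\Psi$, every harmonic $h=\sum c_j\psi_j$ equals $\tilde h\circ\Psi$ for the affine map $\tilde h(x)=\sqrt 2\,c\cdot x+(\sum_j c_j)/N$ on all of $\pl$, so harmonic functions lie in $\mathcal D$. Because $F_w=\Psi^{-1}\circ S_w\circ\Psi$ and the $S_w$ are affine, the class $\mathcal D$ is closed under composition with any $F_w$, via the chain rule $\nabla(\tilde u\circ S_w)=T_w^t(\nabla\tilde u)\circ S_w$.

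\textbf{Integral formula on $\mathcal D$.} For $u,v\in\mathcal D$ with $C^1$ extensions $\tilde u,\tilde v$, I iterate self-similarity to scale $m$:
\begin{equation*}
\DF(u,v)=\sum_{|w|=m}\Bigl(\tfrac{N+2}{N}\Bigr)^m\DF(u\circ F_w,v\circ F_w).
\end{equation*}
On the shrinking cell $S_w X_N$, the Taylor expansion of $\tilde u\circ S_w$ is affine to first order with gradient $T_w^t\nabla\tilde u(S_w z_w)$, so each summand differs from the corresponding constant-gradient harmonic energy by a vanishing relative error. A direct $V_0$-calculation shows that a harmonic function with constant harmonic-coordinate gradient $a$ has $\DF$-energy $\alpha_N|a|^2$ for a dimensional constant $\alpha_N$. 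Substituting this and Kusuoka's formula $\nu(w\Sigma)=(N-1)^{-1}((N+2)/N)^{|w|}\|T_w\|_{\text{HS}}^2$ rewrites the right-hand side as a Riemann sum of the form $\alpha_N(N-1)\sum_{|w|=m}\nu(w\Sigma)\langle Z_m(w)\nabla\tilde u,\nabla\tilde v\rangle$ (with integrands evaluated at chosen points in each cell), which converges as $m\to\infty$, by the $\nu$-a.e.\ convergence $Z_m\to Z$ and dominated convergence, to the claimed integral.

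\textbf{Density and the main obstacle.} The $m$-harmonic functions are $\|\cdot\|$-dense in $\domDF$ by the standard orthogonal-decomposition argument for self-similar Dirichlet forms. Each such $h_m$ corresponds under $\Psi$ to a continuous piecewise affine function on $X_N$; I extend it continuously to $\pl$, mollify at scale $\delta$ to obtain $v_\delta\in C^\infty$ with $v_\delta|_X\circ\Psi\in\mathcal D$, and use the integral formula to show $\DF(v_\delta|_X\circ\Psi-h_m)\to 0$ as $\delta\to 0$, exploiting that $v_\delta$ agrees with $h_m\circ\Psi^{-1}$ outside an $O(\delta)$-neighborhood of the junction set $\Psi(V_m)$ and that the total $\nu$-measure of scale-$m$ cells meeting that neighborhood vanishes as $\delta\to 0$. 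The main obstacle is this last step — translating an ambient-Euclidean mollification into a bound on the intrinsic Dirichlet form — and it is precisely why I would prove the integral formula before density, so that it is available as the needed comparison between Euclidean and fractal energies.
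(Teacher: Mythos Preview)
The paper does not prove this theorem. It is stated in the introductory section as background, with attribution to Kusuoka~\cite{Ku89} and Kigami~\cite{KigMsbleRGeom}, and no proof or sketch is given; the authors simply quote it alongside the other cited results about the Kusuoka measure, the metric $Z$, and the gradient $\tilde\nabla$. So there is no ``paper's own proof'' against which to compare your proposal.

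That said, your outline follows the standard route one finds in the cited sources: pass to the harmonic embedding so that harmonic functions become affine, use self-similarity to localize, linearize a $C^1$ function on each small cell, and recognize the resulting sum as a Riemann sum for $\int\langle\nabla u,Z\nabla v\rangle\,d\nu$ via the definition of $\nu$ and $Z_m$. Two points would need tightening in a full write-up: you must check that the dimensional constant $\alpha_N(N-1)$ you introduce is actually equal to $1$ (this is where the specific normalization of $\nu$ matters), and your density argument via mollification is, as you yourself flag, only heuristic---controlling $\DF$ of the mollified error near the junction set is genuinely delicate and is usually handled instead by showing directly that piecewise-harmonic functions already lie in the $\DF$-closure of $\mathcal D$ cell-by-cell, avoiding ambient mollification altogether.
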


One main result of~\cite{KigMsbleRGeom}, see also Teplyaev~\cite{Tep04}, was that in addition to the energy, measure and metric structure described above, the harmonic Sierpinski gasket $X_3$ admits a geodesic distance analogous to the Riemannian distance. Moreover the distance between two points may be computed by integrating the norm, computed with respect to the metric $Z$, of the directional derivative along a geodesic path joining these points.  Kigami states in~\cite{KigMsbleRGeom}, but does not verify, that similar results can be proved for $X_N$, $N>3$ using a similar but more complicated argument.  The purpose of the present work is to prove this claim by a slightly different approach involving de~Rham curves.  Specifically we prove the following.

\begin{thm}\label{thm:geodesicspace}
If $p,q\in X$ there is a Euclidean geodesic from $p$ to $q$ in $X$.  There is a $C^1$ function $g_{pq}:[0,1]\to X$ that parametrizes this curve, and the length of the curve is
\begin{equation}\label{eqn:geodesiceqn}
\int_0^1 \left\langle g_{pq}'(t), Z\bigl(g_{pq}(t)\bigr)\, g_{pq}'(t) \right\rangle^{1/2}\, dt.
\end{equation}
\end{thm}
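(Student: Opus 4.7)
My strategy is to construct the geodesics as de~Rham curves — curves defined by a self-similar functional equation on a finite subdivision of the parameter interval — and then to derive the length formula and the geodesic property from the regularity theory of such curves. The key structural observation is that in the classical gasket $F_i(p_j) = F_j(p_i) = (p_i+p_j)/2$, so under harmonic conjugation (the identity $\Psi\circ F_k = S_k\circ\Psi$) one has $S_i(p_j) = S_j(p_i)$. This allows the candidate geodesic $g_{ij}:[0,1]\to X_N$ from $p_i$ to $p_j$ to be produced as the unique continuous solution of
\begin{equation*}
g_{ij}(t)=\begin{cases} S_i\bigl(g_{ij}(2t)\bigr), & t \in [0,1/2],\\ S_j\bigl(g_{ij}(2t-1)\bigr), & t \in [1/2,1].\end{cases}
\end{equation*}
Since each $S_k$ has Lipschitz constant $N/(N+2)<1$, the induced operator on $\prod_{i\neq j}\{\gamma\in C([0,1],\pl):\gamma(0)=p_i,\gamma(1)=p_j\}$ is a contraction in the sup norm; continuity at $t=1/2$ is automatic from $S_i(p_j)=S_j(p_i)$, and self-affinity $X_N=\cup_k S_k(X_N)$ forces $g_{ij}([0,1])\subset X_N$.

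The main obstacle is showing each $g_{ij}$ is $C^1$. Iterating the functional equation on the dyadic subinterval coded by a word $w=u_1\dotsm u_m\in\{i,j\}^m$ and formally differentiating, one obtains a relation of the form $g_{ij}'(t)=2^m T_w\, g_{ij}'(2^mt-k)$. Each $T_k$ is anisotropic with contraction ratios $N/(N+2)$ along $\Psi(p_k)$ and $1/(N+2)$ in orthogonal directions, so long products $T_w$ concentrate along a single dominant direction determined by $w$. The de~Rham regularity theorem — combined with essentially the same Oseledets-type argument that underlies the existence of the Kusuoka projection $Z(w)=\lim T_wT_w^t/\|T_w\|^2_{\text{HS}}$ — shows that the renormalized product converges to a continuous unit tangent field along $g_{ij}$ with $\|g_{ij}'\|$ bounded above and away from zero. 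In particular each $g_{ij}$ is $C^1$ and $g_{ij}'(t)\in\operatorname{Range}Z(g_{ij}(t))$ for $\nu$-a.e.\ $t$.

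With $C^1$ regularity established, formula~\eqref{eqn:geodesiceqn} is immediate for the vertex-to-vertex curves: since $Z$ is the orthogonal projection onto the one-dimensional subspace containing $g_{ij}'(t)$, $\langle g_{ij}'(t),Zg_{ij}'(t)\rangle^{1/2}=\|g_{ij}'(t)\|$ and the integral is just the Euclidean arc length. To show $g_{ij}$ minimizes Euclidean length in $X_N$ I would argue scale by scale: any rectifiable curve in $X$ from $p_i$ to $p_j$ must traverse a level-$m$ junction vertex for every $m$, and an induction on $m$ gives that no such curve is shorter than $g_{ij}$, which achieves the scale-$m$ minimum by construction. Finally, for arbitrary $p,q\in X_N$ I would use the cell addresses of $p$ and $q$ to realize the shortest route as a finite concatenation of $S_w$-rescaled de~Rham segments between vertices of $V_m$, and pass to the limit $m\to\infty$ using uniform $C^1$ bounds; the limiting curve $g_{pq}$ is $C^1$ and \eqref{eqn:geodesiceqn} extends by the same projection identity.
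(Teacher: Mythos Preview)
Your construction of $g_{ij}$ as the attractor of $\{S_i,S_j\}$, the appeal to de~Rham regularity for $C^1$, and the derivation of~\eqref{eqn:geodesiceqn} from $g_{ij}'(t)\in\operatorname{Range}Z$ are all essentially what the paper does, and the extension to general $p,q$ by concatenation and a limiting argument is also the same.

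The genuine gap is in your proof that $g_{ij}$ is length-minimizing. You write that you would ``argue scale by scale'' and that ``an induction on $m$ gives that no such curve is shorter than $g_{ij}$'', but no such induction is supplied, and it is not clear one can be made to work. A competing path from $p_i$ to $p_j$ need not pass through the single junction $S_i(p_j)=S_j(p_i)$: it may detour through any chain of level-$1$ cells $S_{k_1}(X),\dotsc,S_{k_r}(X)$, and within each cell the relevant sub-geodesic is an $S_{k}$-image of some $g_{ab}$. Because the $S_k$ are genuinely affine (contraction $N/(N+2)$ in one direction, $1/(N+2)$ in the others), these images do \emph{not} have lengths that are fixed multiples of $\ell(g_{ab})$; the length of $S_k(g_{ab})$ depends on how $g_{ab}$ sits relative to the eigendirections of $T_k$. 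So there is no obvious inductive inequality comparing a detour through several cells to the direct curve $g_{ij}$, and your sketch gives no mechanism for producing one.

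The paper replaces this with a geometric argument you do not have: it projects orthogonally onto the $2$-plane $P=\operatorname{span}\{p_i,p_j\}$, proves that $g_{ij}$ together with the chord $[p_i,p_j]$ bounds a convex region in $P$, and then shows (Lemma~2.8) that the projection of \emph{all} of $X$ lies on the non-convex side of $g_{ij}$. Any competing curve $\gamma\subset X$ then projects to a curve in $P$ of length $\leq\ell(\gamma)$ that, together with $[p_i,p_j]$, encloses that convex region; the classical fact that a curve enclosing a convex set is at least as long as the boundary gives $\ell(\gamma)\geq\ell(g_{ij})$. This projection-and-convexity step is the substantive content you are missing. A secondary point: for the global $C^1$ claim you should also verify that one-sided tangents match at every junction vertex where two de~Rham arcs are spliced; the paper checks this explicitly.
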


Knowing Theorem~\ref{thm:geodesicspace} it follows from the work in Section~6 of Kigami's paper~\cite{KigMsbleRGeom} that the heat semigroup associated to the Dirichlet form $\DF$ on $L^2(\nu)$ has a jointly continuous kernel $p(t,x,y)$ which has Gaussian bounds with respect to the geodesic distance $d_\ast$ on $X$.  Precisely, we obtain the following.
\begin{cor} There are constants $c_1,c_2,c_3,c_4$ such that if $B(x,r)$ denotes the ball of radius $r$ around $x$ in the metric $d_\ast$ then for $t\in(0,1]$ and $x\in X$
\begin{equation*}
\frac{c_1}{\nu(B(x,\sqrt{t}))} \exp \biggl( \frac{-c_2 d_\ast(x,y)^2}{t} \biggr)
\leq p(t,x,y)
\leq \frac{c_3}{\nu(B(x,\sqrt{t}))} \exp \biggl( \frac{-c_4 d_\ast(x,y)^2}{t} \biggr)
\end{equation*}
\end{cor}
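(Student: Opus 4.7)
The proof should proceed as a direct corollary of Theorem~\ref{thm:geodesicspace}: once the geodesic metric $d_\ast$ is in hand, the Gaussian bounds follow from the standard framework for strongly local Dirichlet forms on geodesic metric measure spaces. The plan is to transcribe Section~6 of \cite{KigMsbleRGeom}, which in the $N=3$ case derives exactly these bounds from the $N=3$ version of our geodesic theorem, and to check that each step goes through for general $N$.

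Concretely, I would proceed in four steps. First, combine Theorem~\ref{thm:geodesicspace} with the injectivity of $\Psi$ and the standard comparison between $d_\ast$ and the resistance metric to check that $d_\ast$ is a genuine metric on $X$ inducing the subspace topology inherited from $\pl$, so that $(X,d_\ast,\nu)$ is a geodesic metric measure space. Second, establish the volume doubling property of $\nu$ with respect to $d_\ast$: the self-similar scaling $\nu(S_w(X))=\frac{1}{N-1}\bigl(\frac{N+2}{N}\bigr)^{|w|}\|T_w\|_{\text{HS}}^2$ from Kusuoka's theorem, combined with the comparison of the $d_\ast$-diameter of a cell $S_w(X)$ with $\|T_w\|_{\text{HS}}$ (which one obtains by integrating $\langle g',Zg'\rangle^{1/2}$ along a geodesic contained in the cell and using that $Z(w)$ is a rank-one projection for $\nu$-a.e.\ $w$), reduces doubling to a finite chain-counting estimate on the cells intersecting a ball. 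Third, prove a scale-invariant weak Poincaré inequality for $(\DF,\domDF,\nu)$ on $d_\ast$-balls by the same self-similar reduction, bounding the oscillation of $u$ on a $d_\ast$-ball by a sum of oscillations on comparable cells and invoking a single, fixed Poincaré inequality on $X$. Finally, apply the Sturm--Biroli--Mosco equivalence, which says that for a strongly local regular Dirichlet form on a geodesic metric measure space the conjunction of volume doubling and a scale-invariant Poincaré inequality is equivalent to the two-sided Gaussian heat kernel estimate claimed here.

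The main obstacle is the third step. Kigami's $N=3$ Poincaré argument uses the very simple planar geometry of the affine images $S_w(X_3)$, and extending it to arbitrary $N$ requires checking that the contraction factors $1/(N+2)$ and $N/(N+2)$ in the definition of $T_j$ combine, under composition, so that the $d_\ast$-diameter of $S_w(X)$ remains comparable to $\|T_w\|_{\text{HS}}$ uniformly in $w$. This comparison is exactly what the Kusuoka theorem provides $\nu$-a.e.\ because $Z(w)$ is a rank-one orthogonal projection onto the dominant singular direction of $T_w$; once this is recorded, the dimension-independent parts of the chain and harmonic-extension arguments of Section~6 of \cite{KigMsbleRGeom} carry over without essential modification, yielding the corollary.
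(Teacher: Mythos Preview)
Your proposal is correct and follows the same route as the paper: the paper does not give a separate proof of this corollary at all, but simply records in the sentence preceding its statement that, once Theorem~\ref{thm:geodesicspace} is established, the Gaussian bounds follow from the work in Section~6 of~\cite{KigMsbleRGeom}. Your four-step outline (geodesic metric, volume doubling, Poincar\'e inequality, Sturm--Biroli--Mosco) is precisely an unpacking of what that section does, so there is nothing to add beyond noting that the paper is content to cite Kigami without spelling out these steps.
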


Note that Kajino~\cite{Kajino} has proved more refined estimates for the heat kernel using a related distance estimate on the classical Sierpinski gasket $X_3$.  His approach is not applicable to the gaskets with $N>3$, as he points out in~\cite{Kajino2}, and it is not known whether these more precise results are true in this setting.

\section{Geodesics on $X_N$: Proof of Main Result}

Fix $N\geq2$ and write $X=X_N$ for the harmonic N-Sierpinski gasket.  Recall that there are affine maps $S_j:\pl\to\pl$ so that $X=\cup_1^N S_j(X)$.  To obtain our geodesics we consider the subsets of $X$ that are self-affine under two of the maps $S_j$.  By symmetry it is sufficient to consider the unique, non-empty, compact set $\Gamma$ such that $\Gamma=S_1(\Gamma)\cup S_2(\Gamma)$.  Our initial goal is to prove the following.

\begin{thm}\label{thm:Gammarectifiable}
$\Gamma$ is a $C^1$ plane curve connecting  $p_1,p_2\in V_0$.  If $N\geq3$ it is $C^1$ but not $C^2$; in fact it has a parametrization in which the derivative is H\"older continuous with H\"older exponent
\begin{equation*}
\frac{\log N +2\log 2 - 2\log (1+\sqrt{4N+1}) }{\log (1+\sqrt{4N+1}) - \log 2 -\log(N+2)   }.
\end{equation*}
Moreover, if $x\in\Gamma$, so $x=S_w(X)$  for some infinite word $w$ with letters from $\{1,2\}$ then $Z(w)$ is projection onto the tangent direction of $\Gamma$ at $w$.
\end{thm}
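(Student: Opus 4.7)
The plan has three phases: realize $\Gamma$ as a continuous planar curve by a de~Rham construction, upgrade to $C^1$ with the stated H\"older exponent by a cone-contraction argument, and identify $Z(w)$ with the tangent direction via singular-value asymptotics.

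First I would do two preliminary computations that reduce the problem to 2 dimensions. Using \eqref{eq:Sj} together with $\|p_j\|^2 = (N-1)/(2N)$ and $p_1\cdot p_2 = -1/(2N)$, a direct calculation gives the \emph{joining condition}
\begin{equation*}
S_1(p_2) = S_2(p_1) = \frac{p_1 + p_2}{N+2}.
\end{equation*}
A second calculation shows that $V := \spn\{p_1, p_2\}$ is invariant under both $T_1$ and $T_2$, and in the (non-orthogonal) basis $\{p_1, p_2\}$ the linear parts are
\begin{equation*}
T_1|_V = \tfrac{1}{N+2}\begin{pmatrix} N & -1 \\ 0 & 1 \end{pmatrix}, \qquad T_2|_V = \tfrac{1}{N+2}\begin{pmatrix} 1 & 0 \\ -1 & N \end{pmatrix}.
\end{equation*}
Since the translation parts of $S_1,S_2$ also lie in $V$, the attractor $\Gamma$ sits in the affine plane through $p_1$ parallel to $V$.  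De~Rham's construction then gives a continuous surjection $\gamma_0:[0,1]\to\Gamma$ defined for $t=\sum_{i\geq 1}b_i 2^{-i}$ with $b_i\in\{0,1\}$ by
\begin{equation*}
\gamma_0(t):=\lim_{n\to\infty} S_{b_1+1}\circ S_{b_2+1}\circ\cdots\circ S_{b_n+1}(p_1);
\end{equation*}
uniform contraction gives convergence, the joining condition ensures agreement at dyadic rationals, and $\gamma_0(0)=p_1$, $\gamma_0(1)=p_2$.

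For the $C^1$ upgrade I would invoke a Birkhoff-style cone contraction on $V$.  Both $T_1,T_2$ preserve the closed convex cone $C:=\{a p_1 + b p_2: a\leq 0,\ b\geq 0\}$ (containing $p_2-p_1$) and map $\partial C$ strictly into the interior of $C$, so each is a strict contraction in the Hilbert projective metric on $C$.  Hence for every infinite word $w=w_1 w_2\cdots$ the nested cones $T_{w_1\cdots w_n}(C)$ collapse projectively to a single line $\tau(w)\subset V$, with geometric rate controlled by the eigenvalues of the period-two product $T_1 T_2$,
\begin{equation*}
\lambda_\pm = \frac{(2N+1)\pm\sqrt{4N+1}}{2(N+2)^2}.
\end{equation*}
A short manipulation using $(1+\sqrt{4N+1})^2 = 4(N+2)^2\lambda_+$ and $\lambda_+\lambda_- = N^2/(N+2)^4$ shows the stated H\"older exponent equals $\log(\lambda_-/\lambda_+)/\log\lambda_+$, which is the ratio of projective contraction rate to in-direction length contraction per pair of letters.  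Reparametrizing $\gamma_0$ by arc length yields the required $C^1$ parametrization $g_{pq}$, with derivative pointing along $\tau(w)$ at the image of $w$.  Failure of $C^2$ for $N\geq 3$ will follow because the top eigendirections of $T_1 T_2$ and $T_2 T_1$ are distinct lines in $V$ (immediate from the explicit matrices), so the tangent oscillates too rapidly to be Lipschitz in arc length.

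For the identification $Z(w)=\tau(w)$, I would observe that $Z(w)=\lim_n T_w T_w^t/\|T_w\|^2_{\mathrm{HS}}$ is projection onto the top left-singular direction of $T_w$; on $V^\perp$ every $T_j$ contracts uniformly by $1/(N+2)$, strictly smaller than the top singular value on $V$, so the top singular direction lies in $V$ and, by the same cone argument applied to $T_w T_w^t$, converges to $\tau(w)$.  The hardest part will be upgrading the cone contraction from an a.e.\ statement---where Furstenberg--Oseledets theory gives the dominant Lyapunov direction for free---to one uniform over \emph{every} infinite word: words with arbitrarily long runs of a single letter probe the eigenvalues $N/(N+2)$ and $1/(N+2)$ of individual $T_j$ rather than the balanced $\lambda_\pm$ of $T_1 T_2$, and handling these uniformly requires quantitative Hilbert-metric estimates.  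This is also precisely what dictates the sharp H\"older exponent.
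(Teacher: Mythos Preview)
Your route is genuinely different from the paper's.  The paper shows $\Gamma$ lies in a plane (Lemma~\ref{lem:GammainP}), identifies it as the de~Rham curve with ratio $r=1/(N+2)$ (Theorem~\ref{thm:GammaisdeRham}), and then simply \emph{cites} Protasov's regularity theorem for de~Rham curves to obtain the $C^1$ property and the H\"older exponent; the $Z(w)$ statement is then a two-line consequence of $C^1$ (chords $T_{[w]_m}([p_1,p_2])$ converge to the tangent).  You instead propose to prove the regularity from scratch via Hilbert-metric cone contraction on $V=\spn\{p_1,p_2\}$ and joint-spectral-type analysis of $T_1T_2$.  Your eigenvalue computation for $T_1T_2$ is correct and your identification of the stated exponent with $\log(\lambda_-/\lambda_+)/\log\lambda_+$ is right, so the approach is sound in principle and has the advantage of being self-contained; the paper's approach is much shorter but outsources the hard work.

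There is, however, a concrete error in your argument.  You assert that each of $T_1,T_2$ maps $\partial C$ strictly into the interior of $C$ and is therefore a strict Hilbert contraction.  This is false: from your own matrices, $p_1$ is an eigenvector of $T_1$ and $p_2$ is an eigenvector of $T_2$, so $T_1$ fixes the boundary ray $\mathbb{R}_{\leq0}\,p_1$ and $T_2$ fixes $\mathbb{R}_{\geq0}\,p_2$.  Neither map is a strict Hilbert contraction on $C$.  What \emph{is} true is that the mixed products $T_1T_2$ and $T_2T_1$ map $C$ into its interior (easy to check from your matrices), so any word containing both letters infinitely often gets projective convergence; the eventually-constant words must be handled separately, and the uniform H\"older bound hinges on controlling words with long but finite constant runs.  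You flag exactly this at the end as ``the hardest part,'' which is correct---but it is not merely a matter of making estimates quantitative: the failure of strict contraction for individual $T_j$ is the mechanism, and your proof as written breaks before reaching that discussion.  Fixing this is essentially reproving the relevant case of Protasov's theorem, which is doable but is real work; the paper sidesteps it entirely by citation.
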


We give several intermediate results before proving the theorem.  Let $P$ be the $2$-dimensional subspace of $\pl$ that contains  $p_1$ and $p_2$ and let $\Pi$ denote the orthogonal projection of $\pl$ onto $P$.

\begin{lem}\label{lem:GammainP}
$P$ is invariant under $S_1$ and $S_2$ and therefore contains $\Gamma$.  Moreover $\Pi(V_0\setminus\{p_1,p_2\})$ is a single point $p$.  
\end{lem}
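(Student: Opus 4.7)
The plan is to verify both claims directly from the formulas already in the excerpt. For the invariance of $P$, equation~\eqref{eq:Sj} expresses $S_j x$ as a linear combination of $x$ and $p_j$ alone. For $j \in \{1,2\}$ and $x \in P = \spn(p_1,p_2)$, both summands lie in $P$, so $S_j(P) \subseteq P$. Since $\Gamma$ is the unique non-empty compact attractor of the contractive IFS $\{S_1, S_2\}$ on $\pl$, it can be realized as the Hausdorff limit of the iterates $\bigcup_{w \in \{1,2\}^m} S_w(\{p_1\})$. Every such iterate lies in the closed set $P$, whence $\Gamma \subseteq P$.

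For the second claim the cleanest route is symmetry. The symmetric group $S_N$ acts on $\pl$ by orthogonal transformations permuting $p_1,\dotsc,p_N$ (transparent from the identification $\Psi(p_j) = (e_j - (1,\dotsc,1)/N)/\sqrt{2}$ on which $S_N$ acts by permuting coordinates). For $N \ge 3$ the subgroup $G \le S_N$ stabilizing $p_1$ and $p_2$ acts transitively on $\{p_3,\dotsc,p_N\}$; since its elements fix the two linearly independent vectors $p_1, p_2$, they fix $P$ pointwise. Any orthogonal map preserving $P$ commutes with $\Pi$, so for $\sigma \in G$,
\[
\Pi(\sigma p_j) \;=\; \sigma\, \Pi(p_j) \;=\; \Pi(p_j),
\]
the last equality because $\Pi(p_j) \in P$. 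Choosing $\sigma$ that sends $p_j$ to $p_k$ yields $\Pi(p_j) = \Pi(p_k)$ for all $j,k \ge 3$, proving that $\Pi(V_0 \setminus \{p_1,p_2\})$ is a single point $p$ (the case $N=2$ being vacuous).

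If the explicit value of $p$ is wanted later, it can be read off from the inner products $p_j \cdot p_k = -1/(2N)$ and $\|p_j\|^2 = (N-1)/(2N)$ implied by the description of $\Psi(p_j)$: writing $\Pi(p_j) = a p_1 + b p_2$ and pairing with $p_1, p_2$ gives a $2 \times 2$ system whose solution is $a = b = -1/(N-2)$, i.e.\ $p = -(p_1+p_2)/(N-2)$. There is no substantive obstacle here; this is a setup lemma combining a one-line IFS invariance check with a symmetry observation. The only point requiring any care is the distinction between $G$ preserving $P$ setwise and fixing it pointwise, which is immediate from the linear independence of $p_1$ and $p_2$.
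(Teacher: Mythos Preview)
Your proof is correct and follows essentially the same approach as the paper: both show $S_j x$ is a linear combination of $x$ and $p_j$ (you via~\eqref{eq:Sj}, the paper via the eigenvalue description of $T_j$), both deduce $\Gamma\subset P$ from uniqueness of the IFS attractor, and both invoke the permutation symmetry of the $p_k$ for the projection claim. Your treatment of the symmetry is more explicit than the paper's one-line appeal, and your computation of $p=-(p_1+p_2)/(N-2)$ is a correct bonus not in the paper.
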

\begin{proof}
It suffices by symmetry to consider the action of $S_1$.  If $x\in P$ we may orthogonally decompose it as $x=\alpha p_1+ (x-\alpha p_1)$.  Then \begin{equation*}
	S_1(x)=\frac{N}{N+2}\alpha p_1  + \frac{1}{N+2} (x-\alpha p_1)
	= \frac{N-1}{N+2} \alpha p_1 + \frac{1}{N+2} x
	\end{equation*}
which is a linear combination of $p_1\in P$ and $x\in P$, so lies in $P$.  This proves invariance of $P$ under $S_1$ and $S_2$.   It follows that the closure of $\{S_w p_1: w\in W_*\}$ is a subset of $P$, but since it is also non-empty, compact, and equal to the union of its images under $S_1$ and $S_2$ it must be equal to $\Gamma$, proving $\Gamma\subset P$.  Finally, symmetry ensures that all points in $V_0\setminus\{p_1,p_2\}$ project to the same point $p\in P$, and that $|p_1-p|=|p_2-p|$. 
\end{proof}

Many other properties of $\Gamma$ now follow from the fact that $\Gamma$ is an arc on a classical type of curve  introduced by de~Rham~\cite{deRham1,deRham2}. This fact was noted by Teplyaev in~\cite{Tep04} for the case of the classical Sierpinski gasket, where he also gave a number of results about the energy and Laplacian in harmonic coordinates.

\begin{defn}\label{def:deRhamcurve}
Fix a polygonal arc $A_0$ and a ratio $r\in(0,1/2)$. Inductively suppose we have defined the polygon $A_{k-1}$, introduce two new vertices on each side of $A_{k-1}$ so as to divide the side into subintervals with length ratios $r:1-2r:r$ and let $A_k$ be the polygon defined by the new vertices ordered as they were on $A_{k-1}$.  De Rham proved the $A_k$ converge to a continuous curve $A$; we call $A$ a de~Rham curve.
\end{defn}

To understand the de~Rham curves it helps to recognize that the midpoints of segments of the initial polygon are unchanged by the construction, and that  the construction of the arc between two adjacent midpoints is independent of the remainder of the curve, so that it is sufficient to consider the region between two such midpoints.  We can therefore perform the construction with an initial polygon having three vertices and two edges to obtain a de~Rham curve between the midpoints of these edges.  The significance for our problem is in the following theorem.

\begin{thm}\label{thm:GammaisdeRham}
$\Gamma$ is an arc of a de~Rham curve in $P$ with scale factor $r=\frac{1}{N+2}$.
\end{thm}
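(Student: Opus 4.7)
I plan to realize $\Gamma$ as a de~Rham curve arising from a specific three-vertex polygon $A_0$ in $P$ with scale factor $r=1/(N+2)$, then identify it with the attractor of $\{S_1,S_2\}$. Two properties of the de~Rham construction drive the argument. First, the subdivision rule is affine-equivariant: if $\phi$ is affine and $A(q_0,q_1,q_2)$ denotes the de~Rham curve joining the midpoints of the two edges of polygon $(q_0,q_1,q_2)$, then $\phi\bigl(A(q_0,q_1,q_2)\bigr) = A(\phi q_0,\phi q_1,\phi q_2)$. Second, the midpoint decomposition noted in the excerpt reads $A(q_0,q_1,q_2) = A(w_1,w_2,w_3)\cup A(w_2,w_3,w_4)$, where $w_1,\dots,w_4$ are the vertices of the once-subdivided polygon. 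If $A_0=(q_0,q_1,q_2)$ can be chosen so that $S_1$ affinely maps it to $(w_1,w_2,w_3)$ and $S_2$ maps it to $(w_4,w_3,w_2)$, then $A(A_0) = S_1(A(A_0))\cup S_2(A(A_0))$, and uniqueness of the attractor of the contractive IFS $\{S_1,S_2\}$ forces $A(A_0)=\Gamma$.

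\textbf{Finding $A_0$.} Since the de~Rham curve joins the midpoints of its two edges and $\Gamma$ joins $p_1,p_2$, I take $q_0 = 2p_1 - q_1$ and $q_2 = 2p_2 - q_1$. Writing $S_1(x)=T_1 x + c$ and equating $S_1(q_1)-S_1(q_0)$ with $w_2-w_1 = (1-2r)(q_1-q_0)$ yields $T_1(q_1-q_0) = (1-2r)(q_1-q_0)$, so $q_1-q_0$ lies in the eigenspace of $T_1$ for the eigenvalue $N/(N+2)=1-2r$, which is the $p_1$-direction. With $q_0 = 2p_1-q_1$, this places $q_1$ on the line $\mathbb{R}p_1$. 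The symmetric requirement for $S_2$ places $q_1$ on $\mathbb{R}p_2$; since $p_1$ and $p_2$ are linearly independent in $P$, we obtain $q_1=0$ and $A_0 = (2p_1,\,0,\,2p_2)$.

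\textbf{Verification and main obstacle.} Subdivision of $A_0$ gives $w_1 = 2(1-r)p_1$, $w_2 = 2rp_1$, $w_3 = 2rp_2$, $w_4 = 2(1-r)p_2$. The identities $S_1(2p_1)=w_1$ and $S_1(0)=w_2$ are immediate from $S_1(p_1)=p_1$ and $T_1 p_1 = (N/(N+2))p_1$. For $S_1(2p_2)=w_3$ I would decompose $p_2 = -\frac{1}{N-1}p_1 + v$ with $v\perp p_1$ (using the inner products $p_j\cdot p_k = (\delta_{jk}-1/N)/2$ that follow from $\Psi(p_j)=(e_j-\mathbf{1}/N)/\sqrt{2}$) and apply $T_1$ componentwise to obtain $T_1 p_2 = (p_2-p_1)/(N+2)$, from which $S_1(2p_2)=w_3$ drops out. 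The $p_1\leftrightarrow p_2$ symmetry handles $S_2$, giving $S_2(2p_1,\,0,\,2p_2)=(w_2,w_3,w_4)$ and hence the reversed map into $(w_4,w_3,w_2)$ required above. Combining with the strategy yields $\Gamma = A(A_0)$, which is an arc of a de~Rham curve in $P$ with scale factor $r=1/(N+2)$. The only non-routine step is pinpointing the ``hidden'' middle vertex $q_1=0$; once the eigenvector analysis of $T_1,T_2$ forces it, the remainder is a short affine computation together with the IFS uniqueness.
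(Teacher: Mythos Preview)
Your proof is correct and follows essentially the same approach as the paper: both take the initial polygon $A_0=(2p_1,0,2p_2)$, show the resulting de~Rham arc is invariant under $\{S_1,S_2\}$, and conclude $A=\Gamma$ by uniqueness of the IFS attractor. Your presentation is slightly more streamlined, invoking affine equivariance and the midpoint decomposition of the limit curve directly, whereas the paper establishes the same self-similarity via an induction on the approximating polygons $A_k$ (with explicit verification that the two half-polygons meet collinearly at $S_1(p_2)=S_2(p_1)$).
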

\begin{proof}
Let  $A_0$ be the line segments from $2p_1$ to $0$ and $0$ to $2p_2$, so that $p_1$ and $p_2$ are the midpoints of the sides.  Let $A_k$, $A$ be as in Definition~\ref{def:deRhamcurve} with ratio $r$, except that we retain only the arc between $p_1$ and $p_2$.

Write $[a,b]$ for the line segment from $a$ to $b$ and $D$ for one step of the de~Rham construction.    A key observation is that $D$ commutes with $S_1$ and $S_2$ because $D$ depends only on dividing line segments according to fixed proportions and the $S_j$ preserve proportions. We use this to show  inductively that  $A_{k}=\cup_{|w|=k} S_w\bigl([p_1,0]\cup [p_2,0]\bigr)$.  For $k=0$ this is trivial, as the edges of $A_0$ are precisely $[p_1,0]\cup [p_2,0]$, but our induction will actually start at the $k=1$ case.  To verify this latter, observe that $S_1$ fixes $p_1$ and scales $[p_1,0]$ by $N/(N+2)$, so it maps $[p_1,0]$ to $[p_1,2p_1/(N+2)]$ which is one edge of $A_1$. Similarly $S_2$ maps $[p_2,0]$ to $[p_2,2p_2/(N+2)]$, which is another edge of $A_1$.  By direct computation we then check that $S_1p_2=S_2p_1=(p_1+p_2)/(N+2)$, which is the midpoint of $[2p_1/(N+2),2p_2/(N+2)]$ and conclude that the union  $S_2([p_1,0])\cup S_1([p_2,0])$ is the third edge of $A_1$.

The inductive step uses that $A_{k-1}= \cup_{|w|=k-1} S_w\bigl([p_1,0]\cup [p_2,0]\bigr)$ to see that $A_{k-1}=S_1(A_{k-2})\cup S_{2}(A_{k-2})$, and therefore for $k\geq2$
\begin{equation*}
	A_k=D(A_{k-1}) 
	=D \bigl( S_1(A_{k-2}) \cup S_2(A_{k-2}) \bigr).
	\end{equation*}
All pairs of neighboring segments in $S_1(A_{k-2}) \cup S_2(A_{k-2})$ are internal to either $S_1(A_{k-2})$ or $S_2(A_{k-2})$, with the exception of the segments that meet at $S_1(p_2)=S_2(p_1)$.  However, by the inductive hypothesis these segments run from $S_1 S_2^{k-1}(0)$ to $S_1(p_2)=S_2(p_1)$ and from $S_2 S_1^{k-1}(0)$ to $S_1(p_2)=S_2(p_1)$.  Moreover there is $c$ so $S_2^{k-1}(0)=cp_2$ and $S_1^{k-1}(0)=cp_1$, because both $S_1$ and $S_2$ contract by the same factor along the respective directions $p_1$, and  $p_2$.  Using~\eqref{eq:Sj} and computing $(N-1)p_1\cdot p_2/\|p_1\|^2=-1$ we find
\begin{align*}
	S_1 S_2^{k-1}(0) = S_1 c p_2 = \frac{cp_2}{N+2} + \frac{2-c}{N+2}p_1\\
	S_2 S_1^{k-1}(0) = S_2 c p_1 = \frac{cp_1}{N+2} + \frac{2-c}{N+2}p_2
	\end{align*}
so that the midpoint is $(p_1+p_2)/(N+2)=S_1(p_2)=S_2(p_1)$.  This shows that the two ending segments of $S_1(A_{k-2})$ and $S_2(A_{k-2})$ form a single line segment in $A_{k-1}$ as soon as $k\geq2$.  It follows that all of the de~Rham construction for $D \bigl( S_1(A_{k-2}) \cup S_2(A_{k-2}) \bigr)$ occurs within either $S_1(A_{k-2})$ or $S_2(A_{k-2})$. Then commuting $D$ with the $S_j$, $j=1,2$ gives
\begin{equation*}
	A_k=D \bigl( S_1(A_{k-2}) \cup S_2(A_{k-2}) \bigr)
	=S_1 \bigl( D(A_{k-2}) \bigr) \cup S_2 \bigl( D(A_{k-2}) \bigr)
	= S_1 (A_{k-1}) \cup S_2 (A_{k-1}).
	\end{equation*}
This and the inductive hypothesis ensure $A_k=\cup_{|w|=k} S_w\bigl([p_1,0]\cup [p_2,0]\bigr)$, from which we deduce that  $A_k$ converges to a set $A$ that is invariant under the iterated function system $\{S_1,S_2\}$. Uniqueness of the attractor of the i.f.s.\ then ensures $A=\Gamma$.
\end{proof}

The de~Rham curves have been fairly extensively studied because they have applications in wavelets, approximation theory, and certain areas in computer science.  References for some of these may be found in a paper of Protasov~\cite{Prot04}, which makes a detailed study of the regularity of these curves.  It is proved there that the de~Rham curves are affine similar sets, which is the main point in the above proof of Theorem~\ref{thm:GammaisdeRham}; more importantly for the current work he proves (a more general version of) the following result.

\begin{thm}[\protect{\cite{Prot04}} Theorem~2]\label{thm:Protasov}
A de~Rham curve is $C^1$ if $r\in(0,\frac13]$. Moreover if $r\in(0,\frac14)$ the curve is not $C^2$ but has H\"{o}lder continuous derivative with H\"{o}lder exponent
\begin{equation*}
	\frac{\log \bigl( r(1-2r)\bigr)} { \log (r+\sqrt{4r-7r^2}) - \log2} - 2.
	\end{equation*} 
\end{thm}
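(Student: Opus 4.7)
The plan is to read off the regularity of the de~Rham attractor $A$ from spectral invariants of the linear parts of the two generating affine maps. First, parametrize $A$ dyadically by $[0,1]$ using the IFS identity $A=S_1(A)\cup S_2(A)$: for $t\in[0,1]$ with binary expansion $t=0.b_1b_2\dotsm$, set
\[
g(t)=\lim_{n\to\infty}S_{b_1}\circ\dotsb\circ S_{b_n}(x_\ast)
\]
for a base point $x_\ast$ (the common fixed midpoint works). The resulting continuous surjection $g\colon[0,1]\to A$ satisfies the functional equations $g(t/2)=S_1(g(t))$ and $g((t+1)/2)=S_2(g(t))$, which reduce every analytic question about $g$ to the action of the linear parts $T_j$ of $S_j$ on tangent-type data, and immediately give an initial H\"older bound on $g$ itself from contractivity.

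To prove $C^1$ regularity when $r\in(0,1/3]$, I would construct a tangent field. At a dyadic point coded by a finite word $w$ the natural candidate direction is $T_wv_\ast/\|T_wv_\ast\|$ for an initial tangent $v_\ast$, and continuity as $|w|\to\infty$ reduces to a contraction property of the induced random dynamics on the projective line $\mathbb{R}P^1$. For the two de~Rham matrices this Birkhoff-type contraction condition is equivalent to an elementary inequality on $r$, saturated at $r=1/3$. A telescoping argument using the functional equations then upgrades pointwise convergence of the normalized tangent field to genuine $C^1$ regularity of $g$, with $g'$ proportional to the limiting tangent.

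To obtain the H\"older exponent and rule out $C^2$ when $r\in(0,1/4)$, one quantifies the rate at which tangent differences decay across adjacent dyadic intervals. Setting up a linear recursion for $T_wv_\ast-T_{w'}v_\ast$ over neighbouring words $w,w'$ gives a $2\times 2$ transition matrix whose dominant eigenvalue $\mu(r)$ satisfies $2\mu(r)=r+\sqrt{4r-7r^2}$. Balancing this tangent-difference decay $\mu(r)^n$ against the spatial contraction $(r(1-2r))^n$ on dyadic intervals, and accounting for the one derivative already extracted from $g$, produces the stated exponent after a standard calculation. Sharpness, and hence failure of $C^2$, follows by exhibiting a periodic word whose iterated action saturates $\mu(r)$, precluding any stronger rate.

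The main obstacle is justifying that $\mu(r)$ is actually the joint spectral radius of the pair of tangent-difference matrices, rather than merely the dominant eigenvalue of one particular product. The sub-multiplicative upper bound is routine, but matching it with a lower bound requires constructing an explicit periodic matrix product realizing $\mu(r)$; in this two-generator de~Rham setting such an orbit should exist but finding it demands a direct computation with the specific $T_j$ derived from~\eqref{eq:Sj}, together with some care to exclude spurious cancellations that could shrink the true joint spectral radius below the eigenvalue of a single product.
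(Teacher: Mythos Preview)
The paper does not prove this statement at all: Theorem~\ref{thm:Protasov} is quoted verbatim from Protasov~\cite{Prot04} and used as a black box in the proof of Theorem~\ref{thm:Gammarectifiable}. There is therefore no proof in the paper for your proposal to be compared against.

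That said, your outline is a reasonable sketch of the strategy behind results of Protasov's type: dyadic parametrization via the IFS functional equations, construction of a tangent field through a projective contraction argument, and identification of the H\"older exponent with a ratio involving the joint spectral radius of a pair of matrices governing tangent differences. You correctly flag the hardest step, namely showing that the candidate $\mu(r)$ really is the joint spectral radius rather than just the spectral radius of a single product. For the purposes of this paper, however, none of this is needed; the authors simply invoke the published theorem.
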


\begin{proof}[Proof of Theorem~\protect{\ref{thm:Gammarectifiable}}]
From Lemma~\ref{lem:GammainP}, $\Gamma$ is a plane curve.  Theorem~\ref{thm:GammaisdeRham} shows that $\Gamma$ is a de~Rham curve, and by Theorem~\ref{thm:Protasov} it is $C^1$.
Now $x\in\Gamma$ is $T_w(X)$ for an infinite word $w$ with letters in $\{0,1\}$. If we write $[w]_m$ for the trunctation to the first $m$ letters then the sequence $T_{[w]_m}([p_1,p_2])$ consist of chords of $\Gamma$ that converge to $x$. Since $\Gamma$ is $C^1$ the normalized sequence $T_{[w]_m}T^t_{[w]_m}/\|T_{[w]_m}\|_{\text{HS}}^2$ converges to the operator of projection onto the  tangent direction to $\Gamma$ at $x$.
\end{proof}

Having established the basic regularity properties of $\Gamma$ our next goal is to show that it is the shortest path between $p_1$ and $p_2$ in $X$.  In order to proceed we collect some additional features of $\Gamma$ in the next result.

\begin{lem}\label{lem:factsaboutUpsilon}
$\Gamma$ and the line segment from $p_1$ to $p_2$ bound an open convex region. If $\Upsilon$ denotes the complement of this region in the triangle $\Omega$ with vertices $p,p_1,p_2$, then $\Upsilon$  is star-shaped with respect to $0$ and has the property that $(\Upsilon+ \alpha p)\cap \Omega\subset \Upsilon$ for any $\alpha\geq0$.
\end{lem}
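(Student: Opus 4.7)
The plan is to prove the three assertions in order, with convexity of the region $R$ bounded by $\Gamma$ and the chord $[p_1,p_2]$ as the main input.

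First, for convexity of $R$, I would argue inductively on the polygonal approximations $A_k$ of Theorem~\ref{thm:GammaisdeRham}. The arc of $A_0$ between $p_1$ and $p_2$ is $[p_1,0]\cup[0,p_2]$, so together with the chord it bounds the closed triangle on $\{p_1,0,p_2\}$, which is convex. Each de~Rham iteration acts on the arc by corner-cutting while fixing the chord, and corner-cutting of a convex polygon produces a convex polygon, so the regions $R_k$ bounded by the $A_k$-arc and the chord form a decreasing sequence of convex sets. Since $A_k\to\Gamma$ uniformly, $R=\bigcap_k R_k$ is convex, bounded by $\Gamma$ and the chord.

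Second, for star-shapedness of $\Upsilon$ with respect to $0$, the key step is that every line through $0$ meets $\Gamma$ at most once. Self-similarity near $p_1$ forces the tangent to $\Gamma$ there to be a $T_1$-eigenvector; since the first linear segment $[p_1,2rp_1]$ of every $A_k$ (with $r=1/(N+2)$) is in the $p_1$-direction, the tangent is along $p_1$ and the tangent line at $p_1$ passes through $0$, and similarly at $p_2$. In coordinates on $P$ with $p_1=(\tfrac{1}{2},b)$, $p_2=(-\tfrac{1}{2},b)$ where $b=\sqrt{(N-2)/(4N)}$, write $\Gamma$ as the graph $(x,f(x))$ of an even convex function with $f'(\pm\tfrac{1}{2})=\pm 2b$ and $f(0)=2b/(N+2)$. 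Setting $h(x)=f(x)-xf'(x)$, one has $h(0)=f(0)>0$, $h(\tfrac{1}{2})=0$, and $h'(x)=-xf''(x)\le 0$ (in the weak sense from convexity of $f$), so $h\ge 0$ on $[0,\tfrac{1}{2}]$ and $f(x)/x$ is non-increasing on $(0,\tfrac{1}{2}]$; by evenness, every line through $0$ meets $\Gamma$ at most once. Since $0$ has $y$-coordinate $0<f(0)$, the origin lies strictly below $\Gamma$, and the at-most-one-intersection property rules out the only way a segment $[0,q]$ with $q\in\Upsilon$ could leave $\Upsilon$---namely, entering $R$ through $\Gamma$ and exiting again through $\Gamma$.

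Third, for the shift property, $p=-(p_1+p_2)/(N-2)=(0,-2b/(N-2))$ in these coordinates, so $+\alpha p$ strictly decreases $y$-coordinates for $\alpha>0$. If $q'\in\Upsilon$ and $q=q'+\alpha p\in\Omega$, then $q$ has the same $x$-coordinate as $q'$ and $y_q\le y_{q'}\le f(x_{q'})=f(x_q)$, placing $q$ on the $p$-side of $\Gamma$; combined with $q\in\Omega$ this gives $q\in\Upsilon$.

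The hardest step will be the convexity of $R$: preservation of convexity by corner-cutting is geometrically clear but needs careful handling since the arc between $p_1$ and $p_2$ is open rather than a closed polygon, and one must track how cutting the corner at $S_1(p_2)=S_2(p_1)$ interacts with the chord. Everything else then reduces to the monotonicity of $f(x)/x$ and direct comparison of $y$-coordinates.
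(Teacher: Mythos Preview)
Your approach is correct and takes a genuinely different route from the paper. For convexity both arguments use the nested polygonal approximations $A_k$ (the paper via intersections of half-planes $K(x,y)$ determined by adjacent vertices, you via corner-cutting), and these are essentially the same. The divergence is in the other two assertions. The paper argues inductively along the de~Rham construction, attaching to consecutive vertices $x+v_1,x,x+v_2$ of $A_{k-1}$ the cone $L(v_1,v_2,y)=\{y-\alpha_1v_1-\alpha_2v_2:\alpha_j\ge0\}$ and showing these cones lie in the closed half-planes $K'$; star-shapedness then follows by specializing to the cone at the initial vertex $0$, and the translation property from the cones $L(p_1,p_2,y)$ at points $y$ of $\Gamma$. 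You instead pass to explicit coordinates: once $\Gamma$ is the graph of an even convex $C^1$ function $f$ whose tangent line at $p_1$ passes through $0$ (so $f(\tfrac12)=\tfrac12 f'(\tfrac12)$), monotonicity of $f(x)/x$ on $(0,\tfrac12]$ is a short calculus computation, and both remaining properties reduce to comparing $y$-coordinates with $f(x)$. Your argument is shorter and more transparent once the graph representation and the endpoint tangent direction are in hand; the paper's cone argument is coordinate-free and stays closer to the de~Rham iteration. Two small points worth making explicit in a full write-up: that $\Gamma$ is a graph over $x$ follows because each $A_k$-arc is a piecewise-linear graph with all slopes in $[-2b,2b]$ (inductively preserved by corner-cutting), so the limit is too; and both the paper and you tacitly exclude the open chord $(p_1,p_2)$ from $\Upsilon$ (the paper via its half-plane description, you via the characterization $y\le f(x)$), which is needed for star-shapedness at $0$ to hold.
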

\begin{proof}
It is useful to think of constructing the interior of the convex hulls of the approximating curves $A_k$ for our de~Rham curve $A$ as part of the inductive construction. If we denote the $k^{\text{th}}$ hull interior by $C_k$ one easily checks that $C_k$, $k\geq1$ can be obtained as follows: for two adjacent vertices $x,y\in A_k$ the line through $x$ and $y$ divides the plane into an open half-plane $K(x,y)$ that contains both $p_1$ and $p_2$ and its complementary (closed) half-plane $K'(x,y)$; $C_k$ is the intersection of $C_{k-1}$ with all $K(x,y)$ corresponding to adjacent vertices $x,y\in A_k$.
Evidently $\cap_k C_k$ is convex and has $\Gamma$ as a boundary arc; when it is further intersected with the half-plane that is bounded by the line through $p_1,p_2$ and contains $0$ we obtain the convex region asserted in the statement.  Then $\Upsilon$ is the intersection of $\Omega$ with the union of the $K'(x,y)$, with the latter  taken over all $k$ and all pairs of adjacent vertices from each $A_k$. 

Consider the above for three consecutive vertices, $x+v_1,x,x+v_2$ from $A_{k-1}$, $k>1$.  The convex set $\{x+\alpha_1v_1+\alpha_2v_2: \alpha_1>0, \alpha_2>0\}$ contains $p_1$ and $p_2$ (for which one of $\alpha_1$ or $\alpha_2$ is at least $1$) and is the intersection $K(x+v_1,x)\cap K(x,x+v_2)$. The vertices of $A_k$ introduced at the $k^{\text{th}}$ step are of the form $y_1=x+\beta_1v_1$, $y_2=x+\beta_2 v_2$ with $\beta_1,\beta_2\in(0,1/2)$, so the line through them has direction $\beta_1 v_1-\beta_2v_2$.  At any point $y$ on this line the cone $L(v_1,v_2,y)=\{y-\alpha_1 v_1-\alpha_2v_2: \alpha_1\geq0,\alpha_2\geq0\}$ is disjoint from the line (because it does not contain points of the form $y+\delta(\beta_1 v_1-\beta_2v_2)$), and cannot contain $p_1,p_2$ because these are reached from $x$ using non-negative multiples of $v_1$ and $v_2$, with at least one coefficient being $1$ or more. 

We make two observations from the reasoning in the preceding paragraph.   The first is that that $L(v_1,v_2,y)\subset K'(y_1,y_2)$ at every $y$ on the segment from $y_1$ to $y_2$.  Then, by induction, if $y\in A_{k}$ is between vertices $z,z'$ of $A_{k}$ we see $L(p_1,p_2,y)$ is contained in $K'(z,z')$ and hence $\Omega\cap L(p_1,p_2,y)\subset\Upsilon$. Taking the union over $k$ gives that $L(p_1,p_2,y)\subset\Upsilon$ if $y$ is on any $A_k$.  This immediately establishes the assertion about translation of $\Upsilon$ by $\alpha p$  for $\alpha>0$.  The second observation is that $L(v_1,v_2,z)\subset K'(y_1,y_2)$ for any $z\in K'(y_1,y_2)$, so in particular $L(v_1,v_2,x)\subset K'(y_1,y_2)$.  Induction from this shows that the first such cone, $L(p_1,p_2,0)$ is contained in $K(z,z')$ for each pair of adjacent vertices $z,z'$ in any $A_k$.  Since $0$ is in this cone we conclude that $0\in K(z,z')$ for each pair of adjacent vertices $z,z'$ in every $A_k$, and since $K(z,z')$ is convex and hence star-shaped with respect to any of its points we it follows that $\Upsilon$ is a union of regions star-shaped with respect to $0$ and is therefore itself star-shaped with respect to $0$.
\end{proof}

\begin{lem}\label{lem:nonconvexside}
$\Pi(X)\subset\Upsilon$.
\end{lem}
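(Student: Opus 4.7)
The plan is to establish $\Pi(V_m)\subset\Upsilon$ by induction on $m$ and then to pass to $X$, using that $\bigcup_m V_m$ is dense in $X$, that $\Pi$ is continuous, and that $\Upsilon$ is closed (as the complement of the open convex region in the closed triangle $\Omega$). The base case is immediate: $\Pi(V_0)=\{p_1,p_2,p\}$ is exactly the vertex set of $\Omega$, which lies on $\partial\Omega\subset\Upsilon$.

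For the inductive step I would take $q'=S_j q$ with $q\in V_m$ and split on $j$. When $j\geq 3$, I project the formula~\eqref{eq:Sj} and use $\Pi p_j = p$ to obtain
\[
\Pi S_j q = \frac{\Pi q}{N+2} + \beta_j(q)\,p,
\]
where a short barycentric computation on $V_0$ shows $\beta_j(q) = (N\alpha_j+1)/(N+2) \geq 1/(N+2) > 0$, with $\alpha_j$ the $p_j$-coefficient of $q$. Star-shapedness of $\Upsilon$ with respect to $0$ from Lemma~\ref{lem:factsaboutUpsilon} gives $\Pi q/(N+2) \in \Upsilon$; since $\Pi S_j q \in \Pi X \subset \Omega$ (because $X$ lies in the convex hull of $V_0$), the translation property $(\Upsilon+\alpha p)\cap\Omega\subset\Upsilon$ applied with $\alpha=\beta_j(q)$ yields $\Pi S_j q\in\Upsilon$.

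When $j\in\{1,2\}$, the $P$-invariance from Lemma~\ref{lem:GammainP} gives $\Pi S_j = S_j\Pi$, so it suffices to show $S_j\Upsilon\subset\Upsilon$. My plan here has three steps: (i) $S_j\Omega\subset\Omega$, by checking the three vertices of $S_j\Omega$ lie in $\Omega$ and invoking affinity; (ii) $S_j\overline{U}\subset\overline{U}$, by combining convexity of $\overline{U}$ with $S_j\Gamma\subset\Gamma\subset\partial\overline{U}$ (self-similarity from Theorem~\ref{thm:GammaisdeRham}) and the fact that $S_j[p_1,p_2]$ is a chord joining two points of $\partial\overline{U}$; and (iii) $S_j\Omega\cap U = S_j U$, which upgrades the inclusion in (ii) to $S_j\Upsilon = S_j\Omega\setminus S_j U \subset \Omega\setminus U = \Upsilon$.

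I expect step (iii) to be the main obstacle: it requires that within the smaller triangle $S_j\Omega$ the only portion of $\partial U$ is $S_j\Gamma$, meaning the opposite arc $S_{3-j}\Gamma$ cannot intrude into $S_j\Omega$ except at the common gluing point $m=S_1 p_2=S_2 p_1$. Unlike the earlier steps this is not a direct consequence of a previously stated property; I would prove it by a direct geometric comparison of the two triangles $S_1\Omega$ and $S_2\Omega$ using the explicit action of $S_j$ on $P$. Once the induction is complete, applying $\Pi$ to $X=\overline{\bigcup_m V_m}$ and using closedness of $\Upsilon$ gives $\Pi(X)\subset\Upsilon$.
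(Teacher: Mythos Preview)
Your approach is essentially the same as the paper's, just packaged differently. The paper shows directly that $\Pi^{-1}(\Upsilon)$ intersected with the simplex is invariant under the full iterated function system $\{S_1,\dotsc,S_N\}$ and then invokes uniqueness of the attractor to conclude $X\subset\Pi^{-1}(\Upsilon)$; your induction on $V_m$ followed by a density-and-closure argument is the same invariance statement unwound level by level. Your treatment of the case $j\geq 3$ is identical to the paper's: project~\eqref{eq:Sj}, use star-shapedness of $\Upsilon$ about $0$, then the translation property $(\Upsilon+\alpha p)\cap\Omega\subset\Upsilon$ from Lemma~\ref{lem:factsaboutUpsilon}.

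For $j\in\{1,2\}$ the paper simply writes that ``since $\Gamma$ is invariant under $S_1|_P$ and $S_2|_P$ it is easy to check that these maps take $\Upsilon$ into itself,'' whereas you give a three-step plan and correctly flag step~(iii) as the nontrivial point. You are being more careful here than the paper is: the paper does not spell out why the opposite arc $S_{3-j}\Gamma$ does not intrude into $S_j\Omega$, and your observation that this is what really needs checking is accurate. Your proposed resolution (direct comparison of $S_1\Omega$ and $S_2\Omega$ using the explicit action on $P$) will work; alternatively one can argue via convexity that $S_j$ maps the closed convex region $\overline{U}$ onto the convex region bounded by $S_j\Gamma$ and the chord $S_j[p_1,p_2]$, and since $S_j\Gamma\subset\Gamma$ and the chord lies in $\overline{U}$, the image $S_j\Omega\setminus S_jU$ lies on the $\Upsilon$-side of $\Gamma$. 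Either way, your outline is sound and matches the paper's strategy.
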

\begin{proof}
We show that the intersection of $\Pi^{-1}(\Upsilon)$ with the simplex having vertices $\{p_j\}_1^N$ is invariant under the iterated function system $\{S_j:j=1,\dotsc,N\}$.  It therefore must contain the attractor $X$.

Both $S_1$ and $S_2$ contract every vector that is orthogonal to $P$ by the same factor.   It follows that $\Pi\circ S_j\circ \Pi^{-1}$ is well-defined and equal to $S_j\bigr|_P$ for $j=1,2$.  Since $\Gamma$ is invariant under $S_1\bigr|_P$ and $S_2\bigr|_P$  it is easy to check that these maps take $\Upsilon$ into itself, whereupon $\Pi\circ S_j\circ \Pi^{-1}=S_j\bigr|_P$ for $j=1,2$ implies $\Pi^{-1}(\Upsilon)$ is invariant under $S_1$ and $S_2$.

By symmetry, to treat the maps $S_j$, $j\geq3$ it suffices to consider $S_3$.  If  one has $\sum_j \alpha_j=1$ then $x=\sum_j \alpha_j p_j$  is a point in the simplex and its projection to $P$ is $\Pi x=\alpha_1 p_1+\alpha_2 p_2+\sum_3^N \alpha_j p$.  We may also compute $S_3x$ using~\eqref{eq:Sj} and the fact that $(N-1)p_j\cdot p_k/\|p_j\|^2=-1$.  We obtain
\begin{gather*}
	S_3x= \frac{x}{N+2} + \frac{1}{N+2}\Bigl(2+ (N-1)\alpha_j - \sum_{k\neq j}\alpha_k \Bigr)p_j
		= \frac{x}{N+2} + \frac{N\alpha_j+1}{N+2}p_j\\
	\Pi S_3 x = \frac{\Pi x}{N+2} + \frac{N\alpha_j+1}{N+2}p
	\end{gather*}
From Lemma~\ref{lem:factsaboutUpsilon}, $\Upsilon$ is star-shaped with respect to $0$ and therefore $\Pi x\in\Upsilon$ implies $\Pi x/(N+2)\in\Upsilon$.  Lemma~\ref{lem:factsaboutUpsilon} further established that the image of $\Upsilon$ under translation by a positive multiple of $p$ followed by intersection with $\Omega$ is contained in $\Upsilon$. As $(N\alpha_j+1)(N+2)>0$ we conclude from this and the previous observation that $\Pi \circ S_3 x\in\Upsilon$ whenever $\Pi x\in\Upsilon$, or equivalently that $S_3$ (and thus, by symmetry, each $S_k$ for $k\geq3$) maps the intersection of  $\Pi^{-1}(\Upsilon)$ with the simplex having vertices $\{p_j\}$ to itself.
\end{proof}

We can now prove our main results. Let us write $Z_\ast= Z\circ \pi^{-1}\circ \Psi^{-1}$, which is  a well defined map from $X\setminus\Psi(V_\ast)$ to the set of infinite words $\Sigma$.
\begin{thm}\label{thm:mainthm}
Using the restriction of the Euclidean metric from $Y$, $\Gamma$ is a geodesic in $X$ connecting $p_1$ to $p_2$. There is a $C^1$ function $g:[0,1]\to\Gamma$ such that the length of $\Gamma$ is
\begin{equation*}
	l(\Gamma) = \int_0^1 \left\langle g'(t), Z_\ast(g(t))\, g'(t) \right\rangle^{1/2} \, dt.
	\end{equation*}
Moreover  we have
\begin{equation}\label{eqn:lengthbound}
	  \frac{1}{3}\leq   l(\Gamma) \leq 2
	\end{equation}
\end{thm}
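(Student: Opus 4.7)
The plan is to prove the three assertions—the geodesic property, the integral formula, and the length bounds—in that order.

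For the geodesic property, I will use two $1$-Lipschitz projections in sequence. The orthogonal projection $\Pi:Y\to P$ sends $X$ into $\Upsilon$ by Lemma~\ref{lem:nonconvexside}. Let $U$ be the open convex region of Lemma~\ref{lem:factsaboutUpsilon} bounded by $\Gamma$ and $[p_1,p_2]$, and let $\Pi_U$ denote the orthogonal projection of $P$ onto $\overline{U}$, which is $1$-Lipschitz because $\overline{U}$ is convex. Since $\Upsilon=\Omega\setminus U$ lies on the $\Gamma$-side of $\overline{U}$, and since $\Pi(X)\cap[p_1,p_2]=\{p_1,p_2\}$ (which can be verified from~\eqref{eq:Sj} along the lines of the proof of Lemma~\ref{lem:nonconvexside}), the composition $\Pi_U\circ\Pi$ sends any curve in $X$ from $p_1$ to $p_2$ to a continuous curve in the simple arc $\Gamma$ joining $p_1$ to $p_2$. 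Such a curve must cover all of $\Gamma$, so its length is at least $l(\Gamma)$; combined with the $1$-Lipschitz property of each projection, this yields $l(\gamma)\geq l(\Gamma)$ for every rectifiable $\gamma\subset X$ joining $p_1$ and $p_2$.

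The integral formula is immediate from Theorem~\ref{thm:Gammarectifiable}: take the $C^1$ parametrization $g:[0,1]\to\Gamma$ supplied there; that theorem also identifies $Z_\ast(g(t))$ as the orthogonal projection onto the tangent direction of $\Gamma$ at $g(t)$, so $Z_\ast(g(t))\,g'(t)=g'(t)$ and
\begin{equation*}
\bigl\langle g'(t),\,Z_\ast(g(t))\,g'(t)\bigr\rangle^{1/2}=|g'(t)|,
\end{equation*}
whose integral is the Euclidean arc length $l(\Gamma)$.

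For the length bounds, I would show the arcs of the de~Rham polygons $A_k$ between $p_1$ and $p_2$ have non-increasing lengths: each edge of $A_{k-1}$ of length $L$ is replaced in $A_k$ by a central segment of length $(1-2r)L$, and the segment joining the two new vertices at an interior vertex $x\in A_{k-1}$ with far endpoints $y_1,y_2$ has length $r|y_1-y_2|\leq r(L_1+L_2)$ by the triangle inequality. Summing and invoking lower semicontinuity of length under uniform convergence gives $l(\Gamma)\leq 2|p_1|=\sqrt{2(N-1)/N}\leq\sqrt{2}<2$. The lower bound is just the triangle inequality: $l(\Gamma)\geq|p_1-p_2|=1\geq\tfrac13$. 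The main obstacle is the geodesic step, where one must verify carefully that $\Pi_U\circ\Pi$ genuinely lands in $\Gamma$ rather than on $[p_1,p_2]$, and that a continuous curve in a simple arc joining its two endpoints has length at least that of the arc; the rest of the length comparison then follows from the standard $1$-Lipschitz property of projection onto a convex set.
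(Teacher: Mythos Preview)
Your argument is correct, and the integral-formula step coincides with the paper's.  The other two steps take different routes.

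For the geodesic property the paper uses only the single projection $\Pi$: since $\Pi(\gamma)\subset\Upsilon$, the closed curve $\Pi(\gamma)\cup[p_1,p_2]$ encloses the convex region $\overline U$, and the classical fact that a convex body has least perimeter among curves enclosing it gives
\[
l\bigl(\Pi(\gamma)\bigr)+|p_1-p_2|\ \geq\ l(\Gamma)+|p_1-p_2|.
\]
Your second projection $\Pi_U$ repackages the same convexity via the $1$-Lipschitz property of nearest-point projection.  This is equally valid, but it costs you the extra verification that $\Pi(X)\cap(p_1,p_2)=\emptyset$: points of $\Upsilon$ lying on the open segment $(p_1,p_2)$ are fixed by $\Pi_U$, so without that check the image of $\Pi_U\circ\Pi$ could traverse the chord rather than $\Gamma$.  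The paper's enclosing-curve argument sidesteps this issue entirely.  (The check itself is not hard---points of the simplex with $\Pi$-image on the line through $p_1,p_2$ are exactly those on the edge $[p_1,p_2]\subset Y$, and $X$ meets that edge only at its endpoints---but it is a genuine extra step.)

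For the length bounds the paper again uses the perimeter comparison, sandwiching $\overline U$ between the triangles $p_1p_2\,0$ and $p_1p_2\,S_1(p_2)$ and computing side lengths.  Your argument is more elementary and yields sharper constants: monotonicity of the de~Rham polygon lengths plus lower semicontinuity gives $l(\Gamma)\leq 2|p_1|=\sqrt{2(N-1)/N}<\sqrt 2$, while the chord bound gives $l(\Gamma)\geq|p_1-p_2|=1$.
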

\begin{proof}
$\Gamma$ is fixed by $S_1$ and $S_2$, so it is a subset of $X$.  The fact that that $\Gamma$ is rectifable ensures the infimum of the length of  curves in $X$ that contain   $p_1$ and $p_2$ is well-defined.  Moreover, if $\gamma$ is such a curve then $\Pi(\gamma)$ is rectifiable, contains $p_1$ and $p_2$ and has length $ l(\gamma)\geq l(\Pi(\gamma))$. By Lemma~\ref{lem:nonconvexside} $\Pi(\gamma)\subset\Upsilon$, so $\Pi(\gamma)\cup[p_1,p_2]$ surrounds  the convex set $T\setminus\Upsilon$. However it is a result of  classical geometry that then 
\begin{equation*}
	l\bigl(\Pi(\gamma)\cup[p_1,p_2]\bigr)\geq l\bigl(\Gamma\cup[p_1,p_2]\bigr)
\end{equation*}
so $\Gamma$ achieves the infimum and is a geodesic.

The same convexity argument shows~\eqref{eqn:lengthbound} because $\Omega\setminus\Upsilon$ is contained in the triangle with vertices $p_1,p_2,0$  and contains the triangle with vertices $p_1,p_2, S_1(p_2)=S_2(p_1)=(p_1+p_2)/(N+2)$. We need only check that the lengths of the sides are $|p_1|=|p_2|=\sqrt{(N-1)/2N}\leq 2$ and $|(N+1)p_1-p_2|/(N+2)=\sqrt{(N^2+N+2)/2(N+1)^2}\geq1/3$ because $N\geq 2$.

We may parametrize $\Gamma$ in the following way. Take $e_1= p_1-p_2 $ and $e_2=\bigl(N/(N-2)\bigr)^{1/2} (p_1+p_2)$ as a basis for the plane $P$; it is easily checked that these are orthonormal.  For $t\in[0,1]$ the fact that $\Gamma$ bounds a convex region ensures there is a unique point $g(t) = p_2+te_1 + se_2$ on $\Gamma$.  The function $g(t)$ is $C^1$ because $\Gamma$ is $C^1$, and $g'(t)$ is in the direction of $Z_\ast(g(t))$, so
 \begin{equation*}
	l(\Gamma) = \int_0^1 \|g't)\|\, dt = \int_0^1 \left\langle g'(t), Z_\ast(g(t))\, g'(t) \right\rangle^{1/2} \, dt. \qedhere
	\end{equation*}
\end{proof}

\begin{cor}\label{cor:mainresult}
If $p,q\in F_w(V_0)$ there is a geodesic from $p$ to $q$ with length comparable to the diameter of $F_w(X)$ which has a $C^1$ parametrization $g_{pq}$ for which~\eqref{eqn:geodesiceqn} gives the length of the geodesic.
\end{cor}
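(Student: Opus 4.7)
The plan is to reduce the corollary to Theorem~\ref{thm:mainthm} via the regular-simplex symmetries of $\{p_1,\dots,p_N\}$ together with the self-affinity $X=\bigcup_1^N S_j(X)$, and then to handle the shortest-curve question using post-critical finiteness. The orthogonal symmetries of $\pl$ permuting $\{p_1,\dots,p_N\}$ conjugate the IFS $\{S_j\}$ to itself, so applying the symmetry that exchanges $\{p_1,p_2\}$ with $\{p_i,p_j\}$ to $\Gamma$ produces, for each pair $i\ne j$, a $C^1$ geodesic $\Gamma_{ij}\subset X$ from $p_i$ to $p_j$ satisfying the full conclusion of Theorem~\ref{thm:mainthm}, with $C^1$ parametrization $g_{ij}:[0,1]\to\Gamma_{ij}$ and $l(\Gamma_{ij})\in[\tfrac13,2]$. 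For $p,q\in F_w(V_0)$ with $\Psi(p)=S_w(p_i)$ and $\Psi(q)=S_w(p_j)$, the candidate curve is $g_{pq}=S_w\circ g_{ij}$, which is $C^1$ with derivative $g_{pq}'(t)=T_w g_{ij}'(t)$ and image $S_w(\Gamma_{ij})\subset S_w(X)\subset X$.

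To verify that \eqref{eqn:geodesiceqn} returns the Euclidean length of $g_{pq}$, I plan to compute $Z_\ast(g_{pq}(t))$ directly from its Kusuoka definition. Writing $g_{ij}(t)=\pi(w'')$ with $w''\in\{i,j\}^{\mathbb{N}}$ gives $g_{pq}(t)=\pi(ww'')$, and the cocycle identity $T_{[ww'']_{|w|+n}}=T_w T_{[w'']_n}$ yields
\begin{equation*}
Z_\ast\bigl(g_{pq}(t)\bigr)=\lim_{n\to\infty}\frac{T_w T_{[w'']_n}T_{[w'']_n}^t T_w^t}{\|T_w T_{[w'']_n}\|_{\text{HS}}^2}=\frac{(T_wv)(T_wv)^t}{|T_wv|^2},
\end{equation*}
where $v$ is the unit tangent to $\Gamma_{ij}$ at $g_{ij}(t)$ supplied by the symmetric version of Theorem~\ref{thm:Gammarectifiable}. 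Thus $Z_\ast(g_{pq}(t))$ is the rank-one orthogonal projection onto the tangent direction to $S_w(\Gamma_{ij})$ at $g_{pq}(t)$, and the integrand of \eqref{eqn:geodesiceqn} evaluates to $|g_{pq}'(t)|$, so the formula returns the Euclidean length $l(S_w(\Gamma_{ij}))=\int|T_w g_{ij}'(t)|\,dt$. Length comparability with $\mathrm{diam}(F_w(X))$ will follow from a direct analysis of this integral: the two-ratio structure of each $T_j$ (contracting $p_j$ by $N/(N+2)$ and orthogonally by $1/(N+2)$) means the dominant contribution to both $l(S_w(\Gamma_{ij}))$ and $\mathrm{diam}(S_w(X))$ comes from the slowest-contracting direction of $T_w$, yielding comparability with constants depending only on $N$.

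The remaining step is that $S_w(\Gamma_{ij})$ is the shortest curve in $X$ joining $\Psi(p)$ to $\Psi(q)$. Post-critical finiteness gives $S_w(X)\cap(X\setminus S_w(X))\subseteq S_w(V_0)$, so any rectifiable $\gamma\subset X$ from $\Psi(p)$ to $\Psi(q)$ decomposes into arcs lying in $S_w(X)$ separated by excursions through $X\setminus S_w(X)$ with endpoints in $S_w(V_0)$. For arcs inside $S_w(X)$, I will transport the convex-region/projection argument of Theorem~\ref{thm:mainthm}---that is, Lemmas~\ref{lem:GammainP}, \ref{lem:factsaboutUpsilon}, and \ref{lem:nonconvexside}---by the affine map $S_w$; since $S_w$ preserves convexity and star-shapedness, the corresponding Euclidean-projection comparison bounds each such arc below by the length of the corresponding piece of $S_w(\Gamma_{ij})$. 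The main obstacle is controlling the excursions: I need that an excursion in $X\setminus\mathrm{int}(S_w(X))$ from $v_1$ to $v_2\in S_w(V_0)$ has length at least $l(S_w(\Gamma_{k_1k_2}))$ with $v_s=S_w(p_{k_s})$, so that replacing it by the corresponding inside geodesic does not increase total length. This cannot be obtained from the single-scale convexity argument alone, because a priori an excursion could shortcut across $\pl$; I plan to resolve it by noting that each excursion lies in a finite union of level-$|w|$ cells adjacent to $S_w(X)$ sharing its $S_w(V_0)$ vertices, then applying the just-established inside case of the corollary at each neighbor cell, together with an induction on $|w|$, to assemble the required lower bound.
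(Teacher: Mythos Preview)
Your argument is essentially correct but does substantially more than the paper's proof, and the extra work is misplaced rather than wrong.

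The paper's proof is three lines: pick an orthogonal symmetry of the simplex sending $\{p_1,p_2\}$ to $\{p_j,p_k\}$, so $\Gamma_{jk}$ is the attractor of $\{T_j,T_k\}$ and inherits all of Theorem~\ref{thm:mainthm}; then assert that $F_w(\Gamma_{jk})$ is a geodesic \emph{in $F_w(X)$}, with the length bound coming from~\eqref{eqn:lengthbound} and the integral formula from the relation between $Z_\ast$ on $F_w(X)$ and $T_w$. Your symmetry step and your computation of $Z_\ast\bigl(g_{pq}(t)\bigr)$ via the cocycle $T_{[ww'']_{|w|+n}}=T_wT_{[w'']_n}$ are exactly this, only written out more carefully than the paper's one-line ``the restriction of $Z_\ast$ to $F_w(X)$ is $T_wZ_\ast$.''

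Where you diverge is in your last paragraph. You set out to prove that $S_w(\Gamma_{ij})$ is the shortest curve in all of $X$ between its endpoints, and this is what forces you into the excursion analysis and the induction on $|w|$ that you flag as the ``main obstacle.'' The paper does not attempt this here: the corollary is stated and proved only for geodesics \emph{within the cell} $F_w(X)$, and the passage from in-cell geodesics to global geodesics is carried out later, in the proof of Theorem~\ref{thm:geodesicspace}, by a cleaner use of post-critical finiteness at a fixed level $m$ (any path hits a finite sequence of $V_m$-points and between consecutive hits must stay in a single $m$-cell, so it suffices to minimize over finitely many concatenations of in-cell geodesics). What you are sketching in your final paragraph is, in effect, a cell-by-cell re-derivation of that later argument; it can be made to work, but it belongs to Theorem~\ref{thm:geodesicspace}, not to this corollary.

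One caution about your in-cell step: you propose to transport Lemmas~\ref{lem:factsaboutUpsilon} and~\ref{lem:nonconvexside} by $S_w$, noting that $S_w$ preserves convexity and star-shapedness. That is true, but the length comparison in Theorem~\ref{thm:mainthm} also uses that the \emph{orthogonal} projection $\Pi$ onto $P$ is $1$-Lipschitz, and $S_w\Pi S_w^{-1}$ is an oblique projection which need not shorten Euclidean lengths. The paper sidesteps this by simply asserting the in-cell geodesic property; if you want to justify it fully you should either redo the ``surrounds a convex region'' argument directly in the plane $S_w(P_{jk})$ with the orthogonal projection onto that plane, or note that the convexity/containment statements are affine-invariant and then invoke the classical convex-perimeter inequality in the image plane.
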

\begin{proof}
Given $1\leq j<k\leq N$ one can find an orthogonal transformation of $Y$ that maps $p_1$ to $p_j$ and $p_2$ to $p_k$.  Under this map the image of $\Gamma$ is a geodesic curve $\Gamma_{jk}$ from $p_j$ to $p_k$.  Evidently $\Gamma_{jk}$ is the invariant set of $\{T_j,T_k\}$.  Now there are $j,k$ such that $p=F_w(p_j)$ and $q=F_w(p_k)$  and $F_w(\Gamma_{jk})$ is a geodesic from $p$ to $q$ in $F_w(X)$.  The length bound follows from~\eqref{eqn:lengthbound} and the validity of~\eqref{eqn:geodesiceqn} comes from the fact that the restriction of $Z_\ast$ to $F_w(X)$ is $T_w Z_\ast$.
\end{proof}

The proof of Theorem~\ref{thm:geodesicspace} now follows the argument given to prove Theorem~5.1 on page~798 of~\cite{KigMsbleRGeom}, but is included below for the convenience of the reader.

\begin{proof}[Proof of Theorem~\protect{\ref{thm:geodesicspace}}]
Suppose $p,q\in V_\ast$, so there is $m$ so they are both in $V_m$.  There is a finite sequence $p=p_1,\dotsc,p_k=q$ with no repetitions such that $p_j$ and $p_{j+1}$ are in the boundary of $F_w(V_0)$ with $|w|=m$.   Concatenation of the geodesics from $p_j$ to $p_{j+1}$ that were constructed in Corollary~\ref{cor:mainresult} provides a finite length path from $p$ to $q$.

Conversely, any finite length path from $p$ to $q$ passes through some sequence of points from $V_m$. We may shorten the path by deleting loops, at which point this sequence $p=p_1,\dotsc,p_k=q$  is finite and has no repetitions. The curve constructed above from geodesics joining $p_j$ to $p_{j+1}$ is evidently the shortest path from $p$ to $q$ that passes through these points in this order.  Moreover there are finitely many sequences of this type, so by taking the minimum of the lengths of the resulting finite collection of paths  we find a geodesic from $p$ to $q$ which we denote $\gamma_{pq}$.  This proves the result when both points are in $V_\ast$. 

Observe that in the above construction if $p,q\in V_m$ and  $\gamma_{pq}$ passes through $p',q'\in V_{m'}$ for some $m'<m$ then replacing the arc of $\gamma_{pq}$ from $p'$ to $q'$ with $\gamma_{p'q'}$ does not increase the length.  Thus for sequences $p_j\to q$ and $q_j\to q$  there are geodesics $\gamma_{j}$  from $p_j$ to $q_j$ such that $\gamma_j$ is an arc of $\gamma_k$ for each $j<k$.  Then $\cup_j\gamma_j$ is  a geodesic from $p$ to $q$ and is made up of arcs as constructed in Corollary~\ref{cor:mainresult}.

 To each of the above arcs there is a finite word and an orthogonal transformation as in Corollary~\ref{cor:mainresult} so that the composition of $F_w$, the orthogonal map and the function $g$ from Theorem~\ref{thm:mainthm}  is a $C^1$ parametrization of the arc. Concatenating these functions (and arcs) gives a parametrization of the geodesic from $p$ to $q$, and it satisfies~\eqref{eqn:geodesiceqn} because Theorem~\ref{thm:mainthm} shows the parametrizations of individual arcs had this property.  Moreover it is $C^1$ because at any point of $V_\ast$ where two arcs from the construction are joined, the one-sided tangents are equal in the same way that they were when we joined the two halves of $\Gamma$ in the proof of Theorem~\ref{thm:GammaisdeRham}. 
\end{proof}

\section{Acknowledgements}
The authors thank Alexander Teplyaev for useful discussions and suggestions.

\end{document}